\newcommand{\keywords}[1]{\par\addvspace\baselineskip
\noindent\keywordname\enspace\ignorespaces#1}
\newcommand{\problemtitle}[1]{\gdef\@problemtitle{#1}}
\newcommand{\probleminput}[1]{\gdef\@probleminput{#1}}
\newcommand{\problemquestion}[1]{\gdef\@problemquestion{#1}}
  \par\addvspace{.5\baselineskip}
  \par\addvspace{.5\baselineskip}
\newcommand{\CGCE}{\texttt{CGC-Equality}}
\newcommand{\CGC}{\texttt{CGC-Decision}}
\newcommand*\samethanks[1][\value{footnote}]{\footnotemark[#1]}
\begin{document}

\mainmatter  


\title{Connected greedy coloring $H$-free graphs  }
\titlerunning{Connected greedy coloring H-free graphs}
\authorrunning{Connected greedy coloring H-free graphs}

\author{Esdras Mota\inst{1}\and Ana Silva\inst{2}\thanks{Partially supported by CNPq/Brazil, Project Universal 401519/2016-3, and FUNCAP/CNPq/Brazil, Project PRONEM PNE-0112-00061.01.00/16.}
\and Leonardo Rocha\inst{3}\samethanks}

\institute{Instituto Federal de Educa\c{c}\~{a}o, Ci\^{e}ncia e Tecnologia do Cear\'{a} (UFC) - Quixad\'{a}, CE, Brazil\\
\url{esdras.mota@ifce.edu.br}
\and 
Departamento de Matem\'{a}tica, Universidade Federal do Cear\'{a} (UFC) - Fortaleza, CE, Brazil\\
\url{anasilva@mat.ufc.br}
\and
Universidade Estadual do Cear\'{a} (UECE) - Fortaleza, CE, Brazil\\
\url{leonardo.sampaio@uece.br}
}

\maketitle

\begin{abstract}

A \emph{connected ordering} $(v_1, v_2, \ldots, v_n)$ of $V(G)$ is an ordering of the vertices such that $v_i$ has at least one neighbour in \{$v_1, \ldots, v_{i - 1}$\} for every $i \in \{2, \ldots, n\}$. A \emph{connected greedy coloring} (CGC for short) is a coloring obtained by applying the greedy algorithm to a connected ordering. This has been first introduced in 1989 by Hertz and de Werra, but still very little is known about this problem. An interesting aspect is that, contrary to the traditional greedy coloring, it is not always true that a graph has a connected ordering that produces an optimal coloring; this motivates the definition of the \emph{connected chromatic number of $G$}, which is the smallest value $\chi_c(G)$ such that there exists a CGC of $G$ with $\chi_c(G)$ colors. An even more interesting fact is that $\chi_c(G) \le \chi(G)+1$ for every graph $G$ (Benevides et. al. 2014). 

In this paper, in the light of the dichotomy for the coloring problem restricted to $H$-free graphs given by Kr\'al et.al. in 2001, we are interested in investigating the problems of, given an $H$-free graph $G$: (1). deciding whether $\chi_c(G)=\chi(G)$; and (2). given also a positive integer $k$, deciding whether $\chi_c(G)\le k$.  We have proved that Problem (2) has the same dichotomy as the coloring problem (i.e., it is polynomial when $H$ is an induced subgraph of $P_4$ or of $P_3+K_1$, and it is $\NP$-complete otherwise). As for Problem (1), we have proved that $\chi_c(G) = \chi(G)$ always hold when $G$ is an induced subgraph of $P_5$ or of $P_4+K_1$, and that it is $\NP$-complete to decide whether $\chi_c(G)=\chi(G)$ when $H$ is not a linear forest or contains an induced $P_9$. We mention that some of the results actually involve fixed $k$ and fixed $\chi(G)$.

\keywords{Vertex coloring, Greedy coloring, Connected greedy coloring, $H$-free graphs, computational complexity.}
\end{abstract}


\section{Introduction}

A \emph{$k$-coloring} (or simply a coloring) of a graph $G = (V, E)$ is a surjective function $\psi : V \rightarrow \{1, 2, \dots, k\}$. 
We refer to the values assigned to vertices as their \emph{colors}.
A \emph{proper} coloring is such that $\psi(u)\neq \psi(v)$ for any edge $uv\in E$. 
A proper $k$-coloring can also be given as a partition of the vertex set of $G$ into $k$ disjoint \emph{stable sets} $S_i =\{v \mid \psi(v) = i\}$,  $1\leq i\leq k$. 
In this case, $S_i$ is the \emph{color class}, formed by the vertices $v$ such that $\psi(v) = i$.
The graph is \emph{$k$-colorable} if it admits a proper $k$-coloring.
In what follows, by a coloring we always mean a proper coloring.

Graph colorings are a natural model for problems in which a set of objects is to be partitioned according to some prescribed rules.
Usually, the rules are related to conflicts between the objects to be partitioned.
This model is remarkably useful for \emph{scheduling problems}~\cite{Werra.85}, such as \emph{frequency assignment}~\cite{Gamst.86}, \emph{register allocation}~\cite{Chow.Hennessy.84,Chow.Hennessy.90}, and the \emph{finite element method}~\cite{Saad.96}.

While it is easy to find a coloring when no bound is imposed on the number of color classes, for most of these applications the challenge consists in finding one that minimizes the number of colors.
The \emph{chromatic number} of a graph $G$ is the minimum number of colors in a coloring of $G$; it is denoted by $\chi(G)$ and we say that $G$ is \emph{$k$-chromatic} if $\chi(G) = k$.
An \emph{optimal coloring} is any coloring with $\chi(G)$ colors.  
To decide if a given graph is $k$-colorable is an $\NP$-complete problem, even if $k$ is not part of the input~\cite{Hol81}.
The chromatic number is even hard to approximate: for all $\epsilon > 0$, there is no algorithm that approximates the chromatic number within a factor of $n^{1 - \epsilon}$ unless $\P = \NP$~\cite{Has96,Zuc07}.

Because of the hardness results and the existing practical applications, the study of coloring heuristics is motivated.
In most coloring heuristics, such as the well studied DSATUR~\cite{Bre79}, the \emph{greedy algorithm} is present.
The greedy algorithm works on an input graph $G = (V, E)$ and an ordering $(v_1, \ldots, v_n)$ of its vertices.
For each $i$ from $1$ up to $n$, we color $v_i$ with the smallest color $k \in \{1, \ldots, n\}$ such that no vertex in $N(v_i)$ is already colored $k$.
We call a coloring obtained by the greedy algorithm a \emph{greedy coloring}.

A nice property is that there always exists an ordering that produces an optimal coloring of $G$.
To see this, let $f$ be a $k$-coloring of graph $G$. The order obtained by ordering every vertex of color~1 first, then the vertices of color~2, and so on, clearly produces a coloring that uses at most $k$ colors. Hence, minimizing the number of colors used by the greedy algorithm equals to finding the chromatic number of $G$. 
This is why the parameter studied is related to the worst-case scenario, i.e., the maximum value $k$ for which the greedy algorithm produces a coloring with $k$ colors. This is called the \emph{Grundy number} and is denoted by $\Gamma(G)$. It is a very well studied parameter, but we refrain from citing any work on it here because we will focus on a special type of greedy colorings.

\textbf{Connected greedy colorings.} In this paper we consider a variant of the greedy algorithm called the \emph{connected greedy algorithm}.
A \emph{connected ordering} $(v_1, v_2, \ldots, v_n)$ of $V(G)$ is an ordering of the vertices with the property that $v_i$ has at least one neighbour in \{$v_1, \ldots, v_{i - 1}$\} for every $i \in \{2, \ldots, n\}$.
The \emph{connected greedy algorithm} works similar as the greedy algorithm, but only takes as input connected orderings.
A \emph{connected greedy coloring} is one obtained by the connected greedy algorithm. Up to our knowledge, this has been first introduced in~\cite{hertz1989connected}, where the authors call the connected greedy algorithm {\it SCORE} (Sequential coloring based on a Connected ORdEr).

The concept of \emph{Connected Grundy Number} is naturally defined as being the maximum $\Gamma_c(G)$ for which $G$ has a connected greedy coloring.  But observe that, contrary to the traditional greedy algorithm, it is not obvious that there always exists a connected ordering that produces an optimal coloring. Indeed, in~\cite{babel1994hard}, the authors present a graph on~18 vertices for which this does not hold. They believed that it was the smallest such graph, but up to our knowledge a formal proof has not been given yet. Also, we mention that there is an infinite number of such graphs, as shown in~\cite{BCD+14}. 
Therefore, it makes sense to define the minimization parameter related to connected greedy colorings. The \emph{connected chromatic number} of a graph $G$ is defined as the minimum integer $k$ for which $G$ admits a connected greedy $k$-coloring; it is denoted by $\chi_c(G)$.  
Interestingly enough, this cannot be larger than the chromatic number plus one~\cite{BCD+14}: 
\begin{equation}\chi(G) \leq \chi_c(G) \leq \chi(G) + 1\mbox{, for every graph $G$}.\label{Equation1}\end{equation}

Given graph parameters $a$ and $b$, a graph $G$ is called \emph{$(a,b)$-perfect} if $a(H)=b(H)$  for every induced subgraph $H$ of $G$. In~\cite{ChSe79}, the authors prove that the $(\chi,\Gamma)$-perfect graphs are exactly the cographs. Following this result, concerning connected greedy colorings, in~\cite{hertz1989connected} the authors characterize a subclass of $(\chi,\chi_c)$-perfect graphs (they make other constraints on the order), while in~\cite{KT.18} the authors characterize the claw-free $(\chi,\Gamma_c)$-perfect graphs. We mention that the complexity of recognizing $(\chi,\chi_c)$-perfect, $(\chi_c,\Gamma_c)$-perfect, and $(\chi,\Gamma_c)$-perfect are all open.

Concerning complexity results, in~\cite{BCD+14} it is proved that deciding if $\chi(G)=\chi_c(G)$ is $\NP$-hard, while  in~\cite{BFKS.15} the authors prove that it is $\NP$-complete to decide whether $G$ has a connected greedy coloring with $k$ colors, for every fixed $k\ge 7$. The latter result has been generalized in this article, as will be seeing forward.

Here, we are interested about  whether there exists a hard dichotomy for the $H$-free graphs, as the one below for the traditional coloring problem, proved by  Kr\'al et. al. in~2001 (a graph is \emph{$H$-free} if it does not contain a copy of $H$ as induced subgrah). In what follows, $G+H$ denotes the graph obtained from the union of $G$ and $H$. Also, given a graph $H$, we denote by ${\cal F}(H)$ the class of $H$-free graphs.

\begin{theorem}[\cite{kral2001complexity}]
\label{thmkral}
Let $H$ be fixed. Given $G\in{\cal F}(H)$ and a positive integer $k$, deciding whether $\chi(G)\le k$ can be done in polynomial time if $H$ is an induced subgraph of $P_4$ or $P_3+ K_1$, and is an \NP-complete problem otherwise.
\end{theorem}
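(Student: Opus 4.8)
The plan is to prove the two directions separately. Throughout, coloring lies in \NP\ (a proper $k$-coloring is a polynomial-size certificate), so it suffices to settle polynomiality on one side and \NP-hardness on the other. The workhorse on the hardness side is the monotonicity ${\cal F}(H')\subseteq{\cal F}(H)$ whenever $H'$ is an induced subgraph of $H$: it lets \NP-hardness of coloring on ${\cal F}(H')$ transfer immediately to ${\cal F}(H)$, so it is enough to treat a short list of ``minimal'' forbidden graphs. One also records the (routine) enumeration that the graphs which are induced subgraphs of $P_4$ or of $P_3+K_1$ are exactly $P_1,P_2,P_3,P_4,2K_1,3K_1,K_2+K_1,P_3+K_1$.

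\textbf{The polynomial cases.} If $H$ is an induced subgraph of $P_4$, then every $H$-free graph is $P_4$-free, i.e.\ a cograph, and $\chi$ is computed in linear time from the cotree using $\chi(G_1\cup G_2)=\max\{\chi(G_1),\chi(G_2)\}$ and $\chi(G_1\vee G_2)=\chi(G_1)+\chi(G_2)$. If $H$ is an induced subgraph of $P_3+K_1$, then every $H$-free graph is $(P_3+K_1)$-free; here I would pass to the complement, observing that $G$ is $(P_3+K_1)$-free iff $\overline G$ is paw-free, and invoking Olariu's structure theorem that each connected component of a paw-free graph is triangle-free or complete multipartite. Since $\chi(G)$ equals the minimum clique-cover number of $\overline G$, which is additive over components, it remains to compute a minimum clique cover of a triangle-free graph (a maximum matching computation) and of a complete multipartite graph (the size of a largest part); hence $\chi(G)$ is obtained in polynomial time.

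\textbf{The hardness cases.} Assume $H$ is neither an induced subgraph of $P_4$ nor of $P_3+K_1$. If $H$ is not a forest, it contains a cycle of length at most $|V(H)|$, so every graph of girth greater than $|V(H)|$ is $H$-free; since $3$-colorability is \NP-complete on graphs of arbitrarily large girth, coloring is \NP-complete on ${\cal F}(H)$. If $H$ is a forest with a vertex of degree at least $3$, then $K_{1,3}$ is an induced subgraph of $H$ (its three neighbours are pairwise non-adjacent in the forest), so ${\cal F}(K_{1,3})\subseteq{\cal F}(H)$; as line graphs are claw-free and deciding $3$-edge-colorability of cubic graphs is \NP-complete (Holyer), coloring is \NP-complete on claw-free graphs, hence on ${\cal F}(H)$. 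Otherwise $H$ is a linear forest outside the eight-graph list; a short case analysis on the number and sizes of the path components shows $H$ then has one of $2K_2$, $K_2+2K_1$, $4K_1$ as an induced subgraph, so it suffices to prove \NP-completeness of coloring on each of ${\cal F}(2K_2)$, ${\cal F}(K_2+2K_1)$, ${\cal F}(4K_1)$. For ${\cal F}(4K_1)$ I would complement: $\overline J$ is $4K_1$-free exactly when $J$ is $K_4$-free, and $\chi(\overline J)$ is the clique-cover number of $J$, so hardness follows from the \NP-hardness of minimum clique cover (equivalently, partition into triangles) on $K_4$-free graphs. For ${\cal F}(2K_2)$ and ${\cal F}(K_2+2K_1)$ I would give direct reductions, most cleanly phrased in the complement (where the forbidden induced subgraph becomes $C_4$, respectively the diamond $K_4-e$), with gadgets designed so that the constructed graph provably avoids the forbidden subgraph.

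\textbf{Main obstacle.} The delicate part is the last hardness case: the reductions for maximum-degree-two linear forests must simultaneously encode a hard problem and keep the output graph $H$-free, which forces a careful gadget analysis; one must also verify the fiddly claim that every linear forest outside the eight-graph list contains $2K_2$, $K_2+2K_1$, or $4K_1$. On the algorithmic side, the $(P_3+K_1)$-free case rests entirely on the paw-free structure theorem and the clique-cover computation it enables, which is the main nontrivial ingredient there.
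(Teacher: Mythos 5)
First, a point of calibration: the paper does not prove Theorem~\ref{thmkral} at all---it is imported verbatim from Kr\'al et al., so the only available comparison is with that external proof. Your skeleton reproduces its architecture faithfully: the eight-graph enumeration of the induced subgraphs of $P_4$ and $P_3+K_1$, cographs for the $P_4$ side, the complement/paw-free/clique-cover argument for the $P_3+K_1$ side, and, on the hardness side, the reduction of every remaining $H$ to cycles (via large girth), to the claw (via Holyer and line graphs), or to one of the three minimal linear forests $2K_2$, $K_2+2K_1$, $4K_1$. The case analysis showing that these three are the only minimal obstructions among linear forests outside the polynomial list is correct, as is the observation that $\NP$-membership is trivial so only hardness and polynomial algorithms need to be supplied. (A minor anachronism: you lean on the large-girth hardness result of Kaminski and Lozin, which postdates the theorem; the statement you need is true, but a self-contained proof would have to establish it or cite an earlier triangle-free/girth construction.)

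There is, however, a genuine gap exactly where you flag your ``main obstacle.'' For ${\cal F}(2K_2)$ and ${\cal F}(K_2+2K_1)$ you only promise ``direct reductions \ldots with gadgets designed so that the constructed graph provably avoids the forbidden subgraph,'' and for ${\cal F}(4K_1)$ you invoke $\NP$-hardness of minimum clique cover on $K_4$-free graphs without justification. These three reductions are essentially the entire technical content of the hardness half---everything else in your outline is bookkeeping---and none of them is routine: producing a $2K_2$-free output graph (equivalently, a $C_4$-free complement) is a strong structural constraint that standard coloring gadgets violate, and the textbook $\NP$-completeness proofs for partition into triangles do not obviously yield $K_4$-free instances, so that claim also needs either a citation or a construction. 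As written, the proposal is a correct and well-organized reduction of the theorem to three unproved lemmas rather than a proof; to complete it you would need to supply the explicit gadgets (or track down the specific constructions in the Kr\'al et al. paper) for precisely those three classes.
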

\vspace{0.25cm}

Even before this result was presented, it was already known that deciding whether $\chi(G)\le k$ is $\NP$-complete for line graphs for every fixed $k\ge 3$~\cite{Hol81}. Because line graphs are claw-free, it follows that deciding $\chi(G)\le k$ is $\NP$-complete for $H$-free graphs, when $H$ contains a claw. Also, in~2007 Kaminski and Lozin~\cite{KL.07} proved that for every fixed $k\ge 3$ and $g\ge 3$, given a graph $G$ with girth at least~$g$, deciding whether $\chi(G)\le k$ is $\NP$-complete. This gives us that deciding $\chi(G)\le k$ for $G\in {\cal F}(C_\ell)$ is $\NP$-complete for every fixed $k\ge 3$ and every fixed $\ell\ge 3$. Therefore, if the problem is polynomial in ${\cal F}(H)$, then $H$ must be a linear forest (forest of paths). 
This is why much work has been done on the problem of deciding, for fixed values of $k$, whether $\chi(G)\le k$ for graphs with no certain induced paths. It has been proved that it is polynomial-time solvable for $P_5$-free graphs and every positive integer $k$~\cite{HKLSS.10}, and when $k=3$ for $P_6$-free graphs~\cite{RS.04} and, more recently, $P_7$-free graphs~\cite{B.etal.17}. Also, it is $\NP$-complete for $P_6$-free graphs and $k\ge 5$, and for $P_7$-free graphs and $k\ge 4$~\cite{huang2016improved}. Therefore, the only open cases are: 4-coloring $P_6$-free graphs, and 3-coloring $P_\ell$-free graphs, for $\ell\ge 8$. These results are summarized in Table~\ref{table:Pell}.

\begin{table}
\begin{center}
\begin{tabular}{l|p{1cm}p{1cm}p{1cm}p{1cm}}
\hline
\multirow{2}{*}{$P_\ell$-free} & \multicolumn{4}{c}{$k$}\\

 & 3 & 4 & 5 & $\ge 6$ \\
\hline
\hline
$\le 5$ & \P & \P & \P & \P \\
\hline
6          & \P & ? & \NP & \NP \\
\hline
7          & \P & \NP & \NP & \NP\\
\hline
8          & ? & \NP & \NP & \NP\\
\hline
\end{tabular}
\caption{The complexity of deciding $\chi(G)\le k$ for $G\in {\cal F}(P_\ell)$ when $k$ and $\ell$ are fixed.}\label{table:Pell}
\end{center}
\end{table}

Now, coming back to our problem, observe that asking whether $\chi_c(G)=\chi(G)$ is not the same as asking whether $\chi_c(G)\le k$ for a given $k$. Indeed, the former question is not in $\NP$, while the latter is. Below, we formally define these problems.

\begin{myproblem}
  \problemtitle{\texttt{CGC-Equality}}
  \probleminput{A graph $G = (V,E)$.}
  \problemquestion{$\chi_c(G)=\chi(G)$?}
\end{myproblem}

\begin{myproblem}
  \problemtitle{\texttt{CGC-Decision}}
  \probleminput{A graph $G = (V,E)$ and an integer $k$.}
  \problemquestion{$\chi_c(G)\le k$?}
\end{myproblem}

Concercing Problem~{\CGC}, part of our results follow directly from previous ones. This is because of the following easy proposition and the fact that some of the  classes ${\cal F}(H)$ are closed under the addition of an universal vertex. To see that the proposition holds, just observe that $\chi_c(G')=\chi(G)+1$, where $G'$ is obtained from $G$ by adding a universal vertex. 

\begin{proposition}\label{prop:universalvertex}
Let ${\cal G}$ be a graph class and suppose that deciding $\chi(G)\le k$ is \NP-complete if ${G\in\cal G}$ for fixed $k$. If ${\cal G}$ is closed under the addition of a universal vertex, then deciding $\chi_c(G)\le k+1$ is $\NP$-complete on ${\cal G}.$
\end{proposition}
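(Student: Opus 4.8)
The plan is to reduce from the $\NP$-complete problem of deciding whether $\chi(G)\le k$ for $G\in{\cal G}$, using the construction hinted at in the paragraph preceding the proposition: given an instance $G$, let $G'$ be the graph obtained by adding a single universal vertex $u$ to $G$. Since ${\cal G}$ is closed under this operation, $G'\in{\cal G}$, and the map $G\mapsto G'$ is clearly computable in polynomial time, so it suffices to show the equivalence $\chi(G)\le k \iff \chi_c(G')\le k+1$.

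First I would record the elementary facts about $G'$. Because $u$ is adjacent to every vertex of $G$, we have $\chi(G')=\chi(G)+1$, and combining this with the general bound~\eqref{Equation1} gives $\chi(G)+1=\chi(G')\le\chi_c(G')\le\chi(G')+1=\chi(G)+2$. In particular $\chi_c(G')\ge k+1$ always holds when $\chi(G)\ge k$; the content is therefore to pin down exactly when $\chi_c(G')$ attains its lower bound $\chi(G)+1$.

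For the forward direction, suppose $\chi(G)\le k$, i.e.\ $\chi(G')=\chi(G)+1\le k+1$. Since $u$ is universal, $G'$ is connected, and any connected ordering of $G'$ that starts with $u$ and then follows an ordering of $V(G)$ that realises an optimal coloring of $G$ will be a valid connected ordering of $G'$: every vertex of $G$ has $u$ as an earlier neighbour, so connectivity is free after the first vertex. Concretely, take an optimal coloring of $G$ with colors $\{2,\dots,\chi(G)+1\}$, order the vertices of $G$ by color class, and prepend $u$; running the connected greedy algorithm on this ordering colors $u$ with $1$ and then colors each vertex of $G$ with a color in $\{2,\dots,\chi(G)+1\}$ (it never uses color $1$ on a vertex of $G$ since $u$ forbids it, and it never needs a color above the one dictated by the optimal coloring, by the standard greedy argument). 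Hence $\chi_c(G')\le \chi(G)+1\le k+1$. For the converse, suppose $\chi_c(G')\le k+1$. Then $\chi(G)+1=\chi(G')\le\chi_c(G')\le k+1$, so $\chi(G)\le k$. This completes the reduction, and since membership in $\NP$ is immediate (guess a connected ordering of size $|V(G')|$ and run the greedy algorithm), the problem is $\NP$-complete on ${\cal G}$.

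I do not anticipate a genuine obstacle here — the proposition is deliberately an ``easy'' packaging lemma. The only point requiring a sentence of care is the claim that prepending a universal vertex to a color-class ordering of $G$ does not inflate the number of greedy colors beyond $\chi(G)+1$; this is exactly the classical observation that ordering by color classes makes the greedy algorithm use no more colors than the given coloring, with the harmless shift caused by $u$ occupying color $1$. One should also state explicitly that $G'\in{\cal G}$ relies on the closure hypothesis, and that the reduction is parameter-preserving in the sense that a fixed $k$ for the coloring problem yields the fixed value $k+1$ for {\CGC}, which is what the statement asserts.
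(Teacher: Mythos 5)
Your proposal is correct and follows exactly the route the paper intends: the paper's entire ``proof'' is the one-line remark preceding the proposition that $\chi_c(G')=\chi(G)+1$ when $G'$ is obtained from $G$ by adding a universal vertex, and your argument is simply a careful expansion of that remark (lower bound from $\chi_c(G')\ge\chi(G')=\chi(G)+1$, upper bound from the connected ordering that starts at the universal vertex and then lists $V(G)$ by color classes, plus the closure hypothesis and NP membership). Nothing further is needed.
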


Observe however that, when $H$ is the cycle on~3 vertices or is a claw, then the class ${\cal F}(H)$ is not closed under the addition of universal vertices. Without getting much ahead of ourselves, we mention that we actually investigate the complexity of deciding $\chi_c(G)=k$ for fixed $k$, when $G$ is a $k$-chromatic $H$-free graph. This gives us hardness results for both problems. However, because we were not able to obtain such hardness results for every possible configuration of $H$, we do not have a dichotomy for the Problem~{\CGCE}, while Problem~{\CGC} have the same dichotomy as in Theorem~\ref{thmkral}.

\begin{theorem}\label{dicotomiaDecision}
Let $H$ be a fixed graph, $G\in{\cal H}$ and $k$ be a positive integer. If $H$ is an induced subgraph of $P_4$ or $P_3+ K_1$, then Problem~{\CGC} can be solved in polynomial time. Otherwise, the problem is $\NP$-complete. Furthermore, if $k$ is considered to be fixed and is at least~7, then it remains $\NP$-complete when $H$ is not a linear forest or contains a $P_6$ as induced subgraph.
\end{theorem}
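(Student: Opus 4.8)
The plan is to prove the polynomial side by reducing \CGC{} to ordinary coloring, and the hardness side by a case analysis on $H$ that reduces everything to a short list of minimal ``obstruction'' classes, dispatching each either by the universal-vertex construction of Proposition~\ref{prop:universalvertex} or by a dedicated reduction. Membership of \CGC{} in \NP{} is immediate: a connected ordering of $V(G)$ is a polynomial certificate, since running the connected greedy algorithm on it and counting the colours used takes linear time. For the polynomial cases, suppose $H$ is an induced subgraph of $P_4$ or of $P_3+K_1$; since $P_4$ is an induced subgraph of $P_5$ and $P_3+K_1$ of $P_4+K_1$, we have $\mathcal{F}(H)\subseteq\mathcal{F}(P_5)$ or $\mathcal{F}(H)\subseteq\mathcal{F}(P_4+K_1)$, and on both of these classes $\chi_c(G)=\chi(G)$ by the equalities established in this paper. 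Hence $\chi_c(G)\le k$ if and only if $\chi(G)\le k$, which is decidable in polynomial time by Theorem~\ref{thmkral}.

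For the hardness, one first records the routine fact that if $H$ is an induced subgraph of neither $P_4$ nor $P_3+K_1$, then $H$ contains, as an induced subgraph, one of $C_\ell$ ($\ell\ge 3$), $K_{1,3}$, $2P_2$, $P_2+2K_1$, or $4K_1$ (a claw-free acyclic graph is a linear forest, and a linear forest that is an induced subgraph of neither $P_4$ nor $P_3+K_1$ contains one of $2P_2$, $P_2+2K_1$, $4K_1$). Since $\mathcal{F}(H')\subseteq\mathcal{F}(H)$ whenever $H'$ is an induced subgraph of $H$, it suffices to prove \NP-completeness of \CGC{} on each of these base classes. Now $\mathcal{F}(2P_2)$, $\mathcal{F}(P_2+2K_1)$, $\mathcal{F}(4K_1)$ and $\mathcal{F}(C_\ell)$ with $\ell\ge 4$ are all closed under adding a universal vertex (no such vertex can lie in an induced $2P_2$, $P_2+2K_1$ or $4K_1$, whose vertices have degree at most $1$, nor in an induced $C_\ell$ with $\ell\ge 4$, which is $2$-regular), and coloring is \NP-complete on each of them by Theorem~\ref{thmkral} --- in the cycle case already with any fixed number of colours at least $3$, by the high-girth hardness of~\cite{KL.07} --- so Proposition~\ref{prop:universalvertex} disposes of these. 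Likewise, when $P_6\le H$ the class $\mathcal{F}(P_6)$ is closed under adding a universal vertex and $t$-coloring is \NP-complete on it for every fixed $t\ge 5$~\cite{huang2016improved}, so Proposition~\ref{prop:universalvertex} makes \CGC{} with any fixed $k\ge 6$ (in particular $k\ge 7$) \NP-complete on $\mathcal{F}(P_6)\subseteq\mathcal{F}(H)$.

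The substantial cases are the claw $K_{1,3}$ and the triangle $C_3$, where the universal-vertex trick collapses --- a universal vertex added to a graph with three pairwise nonadjacent vertices creates a claw, and added to any graph with an edge creates a triangle. Here the plan is to replace the universal vertex by a bounded-size ``seed'' gadget that every connected ordering is forced to process first: starting from an \NP-complete coloring-type problem on claw-free graphs (for instance edge $(k-1)$-colouring of a $(k-1)$-regular graph, whose line graph is claw-free) or on graphs of sufficiently large girth, one builds, for each fixed $k\ge 7$, a $k$-chromatic graph $G$ in the target class, glued to the seed so that $G$ admits a connected greedy $k$-colouring exactly when the source instance is positive. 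To preserve claw-freeness the gadget and its attachment must be built from cliques and near-cliques rather than stars, and to preserve $C_\ell$-freeness the attachment must be kept locally sparse. Since $\chi(G)=k$ throughout, ``$\chi_c(G)\le k$'' coincides with ``$\chi_c(G)=k$'', which yields the fixed-$k$ hardness of \CGC{} on $\mathcal{F}(K_{1,3})$ and on $\mathcal{F}(C_3)$, and hence --- together with the $C_\ell$ cases for $\ell\ge 4$ --- on $\mathcal{F}(H)$ for every $H$ that is not a linear forest (such an $H$ contains a claw or an induced cycle). I expect the main obstacle to be precisely the design and verification of these seed gadgets: one must show that \emph{every} connected ordering emanating from the seed propagates the intended colour pattern and is forced to waste a colour exactly when the encoded instance is infeasible, all while the whole graph avoids the forbidden induced subgraph --- control that the single universal vertex gave for free but that must now be argued over an unbounded family of orderings.
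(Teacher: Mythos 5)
Your decomposition of $H$ into base cases, your handling of the polynomial side (reduce to $\chi(G)\le k$ via the equality $\chi_c=\chi$ on $P_5$-free and $(P_4+K_1)$-free graphs), and your dispatch of $C_\ell$ ($\ell\ge 4$), $2P_2$, $P_2+2K_1$, $4K_1$ and $P_6$ via Proposition~\ref{prop:universalvertex} all match the paper's strategy and are correct. The problem is that for the two cases you yourself identify as ``substantial'' --- $H=C_3$ and $H=K_{1,3}$ --- you do not give a proof: you describe what a seed gadget would have to accomplish and explicitly list its design and verification as the expected obstacle. That obstacle \emph{is} the proof. Without it, $\NP$-completeness is unestablished for every $H$ that contains a triangle or a claw but no induced $C_\ell$ with $\ell\ge 4$ (e.g.\ $H=C_3$ or $H=K_{1,3}$ themselves), and the fixed-$k\ge 7$ claim is unestablished for those $H$; since the threshold $7$ in the statement comes precisely from the claw-free construction, your sketch cannot even recover the stated bound.

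For comparison, the paper fills this gap with two nontrivial constructions. For $C_3$-free graphs it starts from the $k$-chromatic reduction of~\cite{BCD+14} (appending the gadget $G_k$ to each vertex) and then replaces every remaining edge $pq$ by an edge gadget $G^3_k(p,q)$ obtained by iterating a ``double Mycielskian'' operation $k-1$ times on a $P_4$; the key lemmas are that $p$ and $q$ are forced to receive distinct colors in any $(k+1)$-coloring, and that any pair of distinct colors on $p,q$ is realizable by a connected greedy order through the gadget --- both proved by induction on the number of Mycielskian layers. For claw-free graphs it works in the line-graph world: it reduces from $3$-edge-coloring triangle-free cubic graphs, using edge gadgets built on $K_{k-1,k-1}$ (forcing $pp'$ and $qq'$ to share a color via a perfect-matching argument) assembled into a vertex gadget $G'_k$ whose edge $wt$ admits a connected greedy $k$-edge-coloring started with color $\alpha$ iff $\alpha\le k-3$; this is where the hypothesis $k\ge 7$ is used. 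Your proposal correctly predicts the shape of these arguments but supplies neither construction nor the verification over all connected orderings, so as written it does not prove the theorem.
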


\begin{theorem}\label{dicotomiaEquality}
Let $H$ be a fixed graph and $G\in{\cal F}(H)$. If $H$ is not a linear forest or $H$ contains a $P_9$ as induced subgraph, then {\CGCE} is $\NP$-hard.  Also,  if $H$ is an induced subgraph of $P_5$ or of $P_4+K_1$, then $\chi_c(G)=\chi(G)$. 
\end{theorem}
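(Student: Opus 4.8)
\medskip\noindent\emph{Proof idea.} The two statements are independent. For the equality $\chi_c(G)=\chi(G)$, it suffices by~\eqref{Equation1} to produce, for every connected graph $G$ in the two classes, a single connected ordering whose greedy coloring uses at most $\chi(G)$ colors (a disconnected $G$ is handled componentwise, a connected ordering lying inside one component). The starting point is a structural description. A connected $P_5$-free graph has, by the theorem of Bacs\'o and Tuza, a dominating set inducing a clique or a $P_3$. A $(P_4+K_1)$-free graph $G$ is either a cograph -- and then, being connected, it decomposes as a join and so has a dominating edge -- or it contains an induced $P_4$, which must then be dominating, since a vertex adjacent to none of its four vertices would create an induced $P_4+K_1$. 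In every case $G$ has a connected dominating set $D$ of a very restricted shape: a clique, or an induced path on at most four vertices.

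We build the ordering by first listing $D$ (a clique in clique order, a short path in path order); the greedy algorithm then colors $D$ with at most $\omega(G)\le\chi(G)$ colors. We extend the ordering one vertex at a time, each time appending a not-yet-listed vertex whose current listed neighborhood misses at least one color, so that greedy is forced to assign it a color in $\{1,\dots,\chi(G)\}$; as $D$ dominates $G$, every vertex outside the listed set always has a listed neighbor, so the ordering stays connected. The whole proof thus reduces to a single claim: starting from $D$, this extension never gets stuck, i.e.\ one never reaches a set $S$ of listed vertices with $D\subseteq S\subsetneq V(G)$ such that every vertex of $V(G)\setminus S$ that has a neighbor in $S$ already sees all $\chi(G)$ colors on $S$. \textbf{Establishing this -- choosing the next vertex so as to steer the listed set away from such ``saturated'' configurations -- is the core of the positive direction}, and it is exactly here that $P_5$-freeness, respectively $(P_4+K_1)$-freeness, enters, through a case analysis of how a not-yet-listed vertex can see the colors already produced inside $D$ and among the already-listed vertices outside $D$. (The cograph subcase may instead be handled by induction on the cotree.)

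For the \NP-hardness of {\CGCE}: if $H$ is not a linear forest, then either $H$ has an induced claw $K_{1,3}$, so ${\cal F}(H)$ contains all claw-free graphs, or $H$ has maximum degree at most two but is not a disjoint union of paths, hence has an induced cycle $C_\ell$ with $\ell\ge3$, whence ${\cal F}(H)$ contains all $C_\ell$-free graphs; and if $H$ has an induced $P_9$, then ${\cal F}(H)\supseteq{\cal F}(P_9)$. It therefore suffices to prove {\CGCE} \NP-hard on claw-free graphs, on $C_\ell$-free graphs for each fixed $\ell\ge3$, and on $P_9$-free graphs; in each case we in fact prove that, for a suitable fixed $k$, deciding whether $\chi_c(G)=k$ is \NP-complete for $k$-chromatic graphs $G$ in the class (so that $\chi_c(G)=k$ is precisely the question $\chi_c(G)=\chi(G)$). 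Each reduction starts from a coloring problem already \NP-complete on graphs avoiding the relevant induced subgraph -- edge-colorability of cubic graphs, read from the (claw-free) line graph, in the claw case; $k$-colorability of graphs of girth greater than $\ell$ in the $C_\ell$-free case; $k$-colorability of $P_{\ell'}$-free graphs for a small fixed $\ell'$, via Table~\ref{table:Pell}, in the $P_9$-free case -- and attaches to the instance a bounded gadget which, generalizing the idea behind Proposition~\ref{prop:universalvertex}, is forced in every connected ordering to have all $\chi(G)$ colors appear on its neighborhood before it is colored, and hence to take color $\chi(G)+1$, unless the original instance is a yes-instance; the gadget is designed so as not to create the forbidden induced subgraph (a claw, a long induced cycle, or an induced $P_9$). \textbf{The main obstacle is exactly this double demand on the gadget -- that it act as a ``universal-like'' obstruction for the connected greedy process while remaining invisible to the forbidden induced subgraph} -- and meeting it for induced paths is what forces the bound $P_9$ here, rather than the $P_6$ that suffices for Problem~{\CGC}. (For the non-linear-forest cases one may alternatively recycle the $k$-chromatic instances built in the proof of Theorem~\ref{dicotomiaDecision}, on which deciding $\chi_c(G)\le k$ is already \NP-complete.)
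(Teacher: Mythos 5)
Your skeleton matches the paper's (Bacs\'o--Tuza / dominating $P_4$ for the positive half; splitting ``not a linear forest'' into the claw and cycle cases, plus a $P_9$-free reduction, for the hardness half), but in both halves the step you yourself flag as ``the core'' is exactly where all the work lies, and in the positive half your proposed mechanism is the wrong one. You grow the connected ordering one vertex at a time, always appending a vertex whose listed neighborhood misses one of the colors $1,\dots,\chi(G)$, and reduce everything to the claim that this never gets stuck; you give no argument for that claim, and the danger is real, since nothing prevents the grown set from reaching a state where every unlisted vertex with a listed neighbor already sees all $\chi(G)$ colors. The paper avoids this entirely: it fixes an optimal \emph{greedy} coloring $f$ of all of $G$ up front, then modifies $f$ (swapping colors inside $(1,2)$-components, with a case analysis according to whether the dominating $P_4$ receives two, three or four colors) until the restriction of $f$ to some connected dominating set $X\supseteq V(P)$ is itself a CGC of $G[X]$; Proposition~\ref{prop:dominating} then finishes, because the remaining vertices are appended color class by color class of the \emph{fixed} coloring $f$, so no vertex can exceed its predetermined color and the ``getting stuck'' issue never arises. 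That recoloring argument is the actual content of Lemma~\ref{lem:poly} and is absent from your proposal.

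On the hardness side you name the right targets (Lemmas~\ref{lem:CellFree}, \ref{lem:line} and~\ref{lem:paths}), but the substance there is the construction and verification of the gadgets --- $G_k$ and the edge-replacement gadgets $G^5_k(p,q)$ and $G^3_k(p,q)$ (a double Mycielskian) for the $C_\ell$-free cases, the bipartite edge gadgets for line graphs, and above all a globally hard-to-color $P_9$-free graph $H$ admitting a vertex $v$ with no induced $P_8$ starting at $v$ --- none of which you construct. Your parenthetical fallback of recycling the instances from Theorem~\ref{dicotomiaDecision} does not suffice for {\CGCE}: \NP-completeness of deciding $\chi_c(G)\le k$ on a class says nothing about deciding $\chi_c(G)=\chi(G)$ unless the reduction also pins down $\chi(G)$, which is precisely what the $k$-chromatic formulations of Lemmas~\ref{lem:CellFree}--\ref{lem:line} are engineered to guarantee. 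Finally, your uniform description of the gadget (``forced to take color $\chi(G)+1$ unless the original instance is a yes-instance'') fits the $C_\ell$-free reductions but is reversed for the $P_9$-free one, where the paper shows $\chi_c(G')=\chi(G')$ if and only if $\chi(G)>5$, i.e.\ the connected greedy process fails exactly on the yes-instances of the source coloring problem.
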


Observe that the polynomial case of {\CGCE} in the theorem above consists of a very simple algorithm: it always says ``yes" if $G$ is $H$-free, for $H$ an induced subgraph of $P_5$ or $P_4+K_1$. We ask whether this is always the case when $H$ is a path:

\begin{question}
Does there exist $\ell$ such that $\chi_c(G) = \chi(G)$ for every $G\in \bigcup_{j\le \ell}{\cal F}(P_j)$, while deciding whether $\chi_c(G) = \chi(G)$ is $\NP$-hard for every $G\in \bigcup_{j > \ell}{\cal F}(P_j)$?
\end{question}

Also, as already mentioned, we actually prove that deciding $\chi_c(G) = \chi(G)$ is $\NP$-hard even if $G$ is a $k$-chromatic graph, for some fixed values of $k$. The only proof where we did not succeed in fixing $k$ was for the $P_9$-free graphs. Therefore, we ask:
\begin{question}
For fixed $k$, given a $P_9$-free $k$-chromatic graph $G$, can one decide in polynomial time whether $\chi_c(G)=k$?
\end{question}




Now, concerning only $P_\ell$-free graphs, from Table~\ref{table:Pell} and Proposition~\ref{prop:universalvertex} we get the situation depicted in Table~\ref{table:Pellconnected}. Position $k,\ell$ of this table tells us the complexity of deciding $\chi_c(G)\le k$ for $G\in {\cal F}(P_\ell)$. The $\NP$-completeness results propagate along the rows because of the proposition above, and along the columns because ${\cal F}(P_\ell)\subseteq {\cal F}(P_{\ell+1})$. The row related to ${\cal F}(P_5)$ is entirely polynomial because of the result in~\cite{HKLSS.10} previously mentioned and by Theorem~\ref{dicotomiaDecision}. The question marks are open problems and they propagate along the rows in a column.

\begin{table}
\begin{center}
\begin{tabular}{l|p{1cm}p{1cm}p{1cm}p{1cm}}
\hline
\multirow{2}{*}{$P_\ell$-free} & \multicolumn{4}{c}{$k$}\\

 & 3 & 4 & 5 & $\ge 6$ \\
\hline
\hline
$\le 5$ & \P & \P & \P & \P \\
\hline
6          & ? & ? & ? & \NP \\
\hline
7          & ? & ?& \NP & \NP\\
\hline
8          & ? & ? & \NP & \NP\\
\hline
\end{tabular}
\caption{The complexity of deciding $\chi_c(G)\le k$ for $G\in {\cal F}(P_\ell)$ when $k$ and $\ell$ are fixed.}\label{table:Pellconnected}
\end{center}
\end{table}

%

We mention that to prove Theorems~\ref{dicotomiaDecision} and~\ref{dicotomiaEquality}, we actually investigate the edge version of the problems. Since every line graph is claw-free, we get that the problem is \NP-hard for claw-free graphs as well, which means that if $H$ is a non-linear forest, then the problem is $\NP$-hard on $H$-free graphs. Let $\chi'_c(G)$ denote the \emph{connected chromatic index of $G$} (which equals the connected chromatic number of $L(G)$, the line graph of $G$). Observe that, by Vizing's Theorem and Equation~\ref{Equation1}, we get $\chi'_c(G)\in \{\Delta(G),\Delta(G)+1,\Delta(G)+2\}$, for every graph $G$. However, we were not able to find a graph $G$ with $\chi'_c(G)=\Delta(G)+2$; note that such a graph would necessarily be a Class~2 graph (a graph is \emph{Class 2} if $\chi'(G)=\Delta(G)+1$). Hence, we pose the following question.

\begin{question}
Does there exist a Class 2 graph $G$ such that $\chi'_c(G)>\chi(G)$?
\end{question}

Another aspect that relates to our investigation is the notion of hard-to-color graphs. In~\cite{babel1994hard}, a connected graph $G$ is called \emph{globally hard-to-color} if, for every $v\in V(G)$ and every $\alpha\in \{1,\ldots,\chi(G)\}$, if $G$ is greedily colored using a connected  order starting at $v$ with color $\alpha$, then the produced coloring uses more than $\chi(G)$ colors. To prove the hardness results for $P_9$-free graphs, we construct a globally hard-to-color $P_9$-free graph. This leads us to the following question (by our results, we know that the answer is in $\{6,7,8,9\}$):

\begin{question}
What is the smallest $\ell$ such that there exists a globally hard-to-color $P_\ell$-free graph?
\end{question}

Finally, although not directly related to the studied problem, we would like to expose another interesting and non-trivial aspect of greedy colorings in order to pose one last question. In~\cite{ChSe79}, the authors prove that for every integer $k\in \{\chi(G),\ldots,\Gamma(G)\}$, there exists a greedy coloring of $G$ with $k$ colors. We ask whether the same holds for connected greedy colorings.

\begin{question}
Let $G$ be any graph. Does there always exist a connected greedy coloring with $k$ colors, for every $k\in \{\chi_c(G),\ldots,\Gamma_c(G)\}$?
\end{question}

Many other questions can be posed on these colorings since, up to our knowledge, very little is known about both parameters, with only the aforementioned articles having been published on the subject. 

In Section~\ref{sec:outline}, we briefly discuss the proof of Theorems~\ref{dicotomiaDecision} and~\ref{dicotomiaEquality} as a whole. In Sections~\ref{sec:cycles} through~\ref{sec:paths}, we prove the $\NP$-hardness results, and in Section~\ref{sec:P5}, we prove that $\chi_c(G)=\chi(G)$ when $G$ is either a $P_5$-free graph or a $(P_4+K_1)$-free graph. 

%
%
%
%
%


\section{Outline of the proof}\label{sec:outline}

First, observe that, for $k\ge 4$, the graph classes ${\cal F}(C_k)$ and ${\cal F}(P_k)$ are closed under the addition of a universal vertex. Therefore, by Proposition~\ref{prop:universalvertex} and Theorem~\ref{thmkral}, we get that {\CGC} is $\NP$-complete on ${\cal F}(C_k)$ and on ${\cal F}(P_\ell)$, for $k\ge 4$ and $\ell\ge 5$. In fact, we get stronger results since these problems are $\NP$-complete even for fixed $k$, as we mentioned before. 
However, these results do not imply that {\CGCE} is also $\NP$-hard on these graph classes. Here, we actually prove the following results, which imply $\NP$-hardness of both {\CGCE} and {\CGC}.

\begin{lemma}\label{lem:CellFree}
For every $k\ge 5$ and $\ell\in\{3,5\}$, deciding if $\chi_c(G)= k$ is $\NP$-complete even when restricted to $k$-chromatic graphs in ${\cal F}(C_\ell)$. 
\end{lemma}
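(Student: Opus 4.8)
The plan is to reduce from {\CGC} with the number of colors fixed, exploiting Proposition~\ref{prop:universalvertex} together with a careful choice of the ``gadget'' graph so that the $C_\ell$-freeness is preserved. Membership in $\NP$ is immediate: a connected ordering is a polynomial-size certificate, and running the greedy algorithm on it verifies that exactly $k$ colors are used; since $\chi(G)=k$ can be certified separately (we will build instances that are manifestly $k$-chromatic, e.g.\ containing a fixed $K_k$), the whole thing lies in $\NP$. So the work is the hardness reduction, and I would handle $\ell=3$ (triangle-free) and $\ell=5$ separately, since $\mathcal{F}(C_3)$ is \emph{not} closed under adding a universal vertex while $\mathcal{F}(C_5)$ is.

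For $\ell = 5$: I would start from an instance $G'$ of $k'$-colorability restricted to $C_5$-free graphs, which is $\NP$-complete for fixed $k'\ge 3$ by Theorem~\ref{thmkral} (note $P_4$ and $P_3+K_1$ are both induced subgraphs of $C_5$, so $\mathcal{F}(C_5)$ is among the hard classes). Adding a universal vertex $u$ to $G'$ yields a graph $G$ with $\chi(G)=\chi(G')+1$ and, as observed after Proposition~\ref{prop:universalvertex}, $\chi_c(G)=\chi(G')+1$ precisely when $G'$ is $\chi(G')$-colorable---wait, more carefully: one shows $\chi_c(G'+u)=\chi(G')+1$ always, so this alone does not separate instances. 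Instead I would mimic the standard trick: attach to $G'$ a fixed, small $C_5$-free ``equalizer'' gadget that forces any connected greedy ordering to first exhaust a clique $K_{k-1}$, thereby pinning down the first $k-1$ colors before the ordering is allowed to enter a copy of $G'$; then $\chi_c$ of the whole construction is $k$ iff $G'$ admits a proper coloring with the prescribed number of colors that is moreover realizable by the greedy algorithm along some connected extension of the gadget ordering. The gadget must itself be $C_5$-free, which rules out using $C_5$ or larger odd cycles as connectors; I would use disjoint cliques joined along a tree-like backbone, whose induced cycles are all triangles.

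For $\ell = 3$ (triangle-free): this is the more delicate case because we lose both the universal-vertex reduction and all clique gadgets---the target graph is bipartite-like locally, and the natural lower-bound certificates for $\chi$ disappear. Here I would reduce from a triangle-free version of the coloring/connected-coloring problem: by the Kami\'nski--Lozin result cited in the excerpt, deciding $\chi(G)\le k'$ is $\NP$-complete for graphs of arbitrarily large girth (in particular girth $\ge 4$, i.e.\ triangle-free) for every fixed $k'\ge 3$. I would then build a triangle-free gadget---plausibly using long even paths, complete bipartite pieces $K_{a,b}$, or a Kneser/shift-graph-like construction of known chromatic number with large girth---that (i) is triangle-free, (ii) has a forced ``color-profile'' along every connected greedy ordering that starts inside it, and (iii) when glued to the instance $G'$, forces $\chi_c$ of the union to equal $k$ iff $G'$ has an optimal coloring realizable greedily along a connected ordering compatible with the gadget's forced prefix. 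The chromatic-number bound $\chi=k$ of the combined graph would be witnessed not by a clique but by a fixed triangle-free subgraph of chromatic number $k$ embedded in the gadget.

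The main obstacle I anticipate is the triangle-free case: guaranteeing that a triangle-free gadget \emph{forces} the greedy algorithm to use a predictable set of colors on a predictable prefix of \emph{every} connected ordering is much harder than with cliques, since in a triangle-free graph the greedy algorithm has a lot of freedom (a vertex of degree $d$ can still get color~$1$). I would expect to need a gadget with a rigid ``staircase'' structure---nested neighborhoods that behave like a clique for the greedy algorithm without actually containing one (much in the spirit of the classical constructions showing $\Gamma(G)$ can be large on triangle-free graphs)---and the bulk of the proof will be the case analysis verifying that no connected ordering can ``cheat'' the gadget into using fewer colors, and that a valid coloring of $G'$ can always be lifted to a connected greedy $k$-coloring of the whole graph.
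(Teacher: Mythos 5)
Your proposal identifies the right difficulties but does not resolve them, and the central forcing mechanism you describe would not work. A connected greedy ordering may begin at \emph{any} vertex of the constructed graph, so a single ``equalizer'' gadget cannot ``force any connected greedy ordering to first exhaust a clique $K_{k-1}$ before entering $G'$'': an ordering starting inside $G'$ simply never obeys the gadget's prefix. The paper's mechanism is different and essential: a copy of a vertex gadget $G_k$ (three $k$-cliques plus four special vertices $u,u',v,v'$) is appended to \emph{every} vertex of the instance, and its key property is local and start-independent --- in any CGC of the whole graph with $k{+}1$ colors, no matter where the ordering starts, one of $v,v'$ is reached while having at most $k-1$ colored neighbours, which forces the attachment vertex to receive colour at most $k$. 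This is what converts a $(k{+}1)$-colour CGC into a $k$-colouring of the original instance. Relatedly, your choice of source problem is off: since the lemma restricts to $k$-chromatic graphs, both yes- and no-instances of the reduction must be $k$-chromatic, so one must reduce from a promise problem ($(k{-}1)$-colourability of graphs already known to be $k$-colourable and having a universal vertex, NP-complete for $k\ge 5$ via planar $3$-colouring and the Four Colour Theorem); starting from bare $k'$-colourability does not keep $\chi$ of the output fixed across instances.

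The second, larger gap is the $C_3$-free and $C_5$-free structure itself, which you explicitly leave open (``I would expect to need a gadget with a rigid staircase structure\ldots''). The paper does \emph{not} build a triangle-free forcing gadget from scratch; it keeps the clique-based vertex gadget $G_k$ and instead \emph{replaces each edge} of the intermediate graph $G^*$ (except those incident to the special vertices of the $G_k$ copies) by an edge gadget $G^\ell_k(p,q)$ with two properties: (i) in every $(k{+}1)$-colouring its endpoints $p,q$ receive distinct colours, and (ii) for any pair of distinct colours there is a CGC of the gadget realising them at $p$ and $q$. For $\ell=5$ this gadget is a $P_4$ glued to two cliques; for $\ell=3$ it is obtained by iterating a ``double Mycielskian'' operation on a $P_4$, which keeps the gadget triangle-free while raising its chromatic number to $k+1$. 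This edge-replacement idea --- simulating inequality constraints rather than redesigning the whole forcing gadget --- is the missing ingredient in your plan, and without it (or a concrete substitute) the triangle-free case of your argument does not get off the ground.
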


We mention that the hardness for $\ell\notin\{3,5\}$ follows from a result in~\cite{BCD+14}.

\begin{lemma}\label{lem:line}
For every $k\ge 7$, deciding if $\chi'_c(G)=k$ is $\NP$-complete for $C_3$-free $k$-edge-chromatic graphs.
\end{lemma}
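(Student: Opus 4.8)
The plan is to establish $\NP$-hardness by reduction, membership in $\NP$ being routine. Since $G$ is promised to be $k$-edge-chromatic we have $\chi'_c(G)\ge \chi'(G)=k$, so $\chi'_c(G)=k$ is equivalent to $\chi'_c(G)\le k$, and the latter has an obvious polynomial certificate: a connected ordering $(e_1,\dots,e_m)$ of $E(G)$, which can be checked for validity (each $e_i$ shares an endpoint with some $e_j$, $j<i$) and then fed to the greedy algorithm to confirm that at most $k$ colors appear. Hence the problem is in $\NP$; and since $\chi'_c(G)\in\{k,k+1,k+2\}$ by Vizing's Theorem and Equation~\ref{Equation1}, a ``no'' instance means $\chi'_c(G)\in\{k+1,k+2\}$.

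For hardness I would reduce from the problem, shown $\NP$-complete in~\cite{BFKS.15} for each fixed $k\ge 7$, of deciding whether a graph $G'$ satisfies $\chi_c(G')\le k$ (equivalently, whether $G'$ admits a connected greedy vertex-coloring with $k$ colors); an essentially equivalent route would be a direct reduction from $3$-SAT following that paper, and the construction parallels the one used for the vertex version in Lemma~\ref{lem:CellFree}. Given $G'$, I build in polynomial time a \emph{bipartite} graph $G$ with a vertex of degree exactly $k$ and every other degree at most $k$; then $G$ is $C_3$-free (it has no odd cycle at all) and, being bipartite with $\Delta(G)=k$, satisfies $\chi'(G)=k$, so the promise is met automatically and only the equivalence $\chi'_c(G)=k \iff \chi_c(G')\le k$ must be arranged. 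All gadgets are bipartite: a \emph{start gadget} forcing every connected ordering that achieves $k$ colors to begin in one canonical way, so that the remainder of the coloring process is tightly controlled; for each vertex $v$ of $G'$ a \emph{variable gadget} carrying a distinguished edge $e_v$ whose greedy color, under any $k$-color ordering, is forced into a fixed two-element set $\{\alpha_v,\beta_v\}$, thereby encoding the color of $v$; even-length \emph{transmission paths} copying the color of $e_v$ to wherever it is needed (along an even path the greedy colors alternate between two prescribed values); and, for each edge $xy$ of $G'$, an \emph{incompatibility gadget} whose connected greedy edge-colorings use only $k$ colors precisely when the colors transmitted from the gadgets of $x$ and of $y$ differ. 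One direction reads a connected greedy $k$-vertex-coloring of $G'$ and processes the edges of $G$ in the corresponding order (start gadget first, then variable gadgets and transmission paths in the vertex order of $G'$, then incompatibility gadgets), producing a connected ordering of $E(G)$ whose greedy coloring uses exactly $k$ colors; the converse argues that any $k$-color connected ordering of $E(G)$ is, because of the start gadget, forced through the intended sequence of gadgets, so the order in which it enters the variable gadgets is a connected ordering of $V(G')$ and the forced colors of the edges $e_v$ constitute a proper $k$-coloring of $G'$ --- properness being guaranteed by the fact that no incompatibility gadget overflowed to color $k+1$.

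The main obstacle is the gadget engineering under the bipartite (in particular triangle-free) constraint. In the vertex setting one usually \emph{forces} a vertex onto a high color by surrounding it with a clique already colored $1,\dots,j$; that is unavailable here, so each forcing point must instead be fed by pendant combs or paths so arranged that all of the colors $1,\dots,k-1$ appear on edges incident to a common vertex before its main edge is reached --- and this must be done without ever creating a vertex of degree larger than $k$ (which would push $\chi'$ up) and without leaving an ordering the freedom to save a color elsewhere and so spoil the encoding. A second, subtler point, peculiar to connected greedy colorings, is rigidity: the gadgets cannot be processed in an arbitrary order, so each one must behave correctly under \emph{all} connected orderings, and the forward direction must exclude exotic orderings that interleave gadgets in unintended ways; controlling this is precisely the role of the start gadget and of the even-length transmission paths, and verifying that they do their job --- together with checking that on ``yes'' instances no connected ordering beats $k$ colors --- is where the bulk of the work lies. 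The hypothesis $k\ge 7$ is inherited from~\cite{BFKS.15} (and, on the direct-reduction route, is what gives the gadgets enough spare colors to implement the logic).
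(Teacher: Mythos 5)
Your proposal is a plan rather than a proof: every load-bearing component --- the start gadget, the variable gadgets with a ``forced two-element color set,'' the transmission paths, the incompatibility gadgets --- is only postulated to exist, and you yourself concede that constructing them under the triangle-free, bounded-degree constraint and verifying their behaviour under \emph{all} connected orderings ``is where the bulk of the work lies.'' That is exactly the content of the lemma, so nothing has actually been established. The specific claims you would need are far from routine: forcing a single edge's greedy color into a prescribed two-element set under every connected ordering that achieves $k$ colors, and ruling out ``exotic'' interleavings of gadgets, are precisely the kind of rigidity statements that require explicit constructions and case analysis; asserting that suitable bipartite gadgets exist does not discharge them. Moreover, reducing from the vertex problem $\chi_c(G')\le k$ of~\cite{BFKS.15} forces you to simulate vertex-coloring logic inside an edge-coloring instance, which is why your sketch balloons into a full SAT-style gadget architecture.

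The paper takes a much lighter route that you should compare against. It reduces from $3$-edge-colorability of triangle-free \emph{cubic} graphs (\NP-complete by Holyer), so no logical encoding is needed at all. A single edge-gadget $G'_k(p,q)$ is built from $K_{k-1,k-1}$ plus four pendant-type vertices; because the only perfect matching of that gadget containing $pp'$ also contains $qq'$, every $k$-edge-coloring gives $pp'$ and $qq'$ the same color, and K\"onig's theorem supplies $\chi'=\Delta=k$ for free (bipartite, hence triangle-free). Assembling $k$ such gadgets yields a vertex-gadget $G'_k$ with the property that a $(wt,\alpha)$-ECGC with $k$ colors exists iff $\alpha\le k-3$; appending $k-3$ copies of $G'_k$ to each vertex of the cubic graph $G$ forces colors $1,\dots,k-3$ onto the appended edges, so the edges of $G$ itself can only use $\{k-2,k-1,k\}$, i.e.\ $\chi'_c(G^*)=k$ iff $\chi'(G)=3$. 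The degree/color bookkeeping ($d_{G^*}(v)=k$, and the constant $k-3\ge 4$ from $k\ge 7$) replaces all of your rigidity machinery. If you want to salvage your approach you would have to actually exhibit and verify your gadgets; as written, the argument has a genuine gap at its core.
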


These lemmas finish the proof of Theorem~\ref{dicotomiaDecision}. 
As for Theorem~\ref{dicotomiaEquality}, it remains to study the case where $H$ is a linear forest, which is done in the following lemmas.

\begin{lemma}\label{lem:paths}
Deciding if $\chi_c(G)=\chi(G)$ is $\NP$-hard for $P_\ell$-free graphs, for every $\ell\ge 9$.
\end{lemma}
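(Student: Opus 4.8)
The plan is to reduce from a known $\NP$-hard problem about connected greedy colorings, namely the problem (from~\cite{BCD+14}) of deciding whether $\chi_c(G)=\chi(G)$ for an arbitrary graph $G$, and to transform an arbitrary instance into a $P_9$-free (hence $P_\ell$-free for all $\ell\ge 9$) one. The key obstruction is that an arbitrary input graph $G$ may contain long induced paths, and we must "kill" all induced $P_9$'s while preserving both $\chi$ and the value of $\chi_c$, or at least the property $\chi_c=\chi$ versus $\chi_c=\chi+1$. The natural device is to add a large clique, or a universal-like gadget, that forces high chromatic number and destroys long induced paths: if we join a clique $K_t$ completely to $V(G)$, then any induced path can use at most one vertex of $K_t$, so induced paths are only lengthened by at most one or two vertices, which is not enough on its own. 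Instead the plan is to use a more careful construction where the "padding" graph $F$ is chosen so that $G':=G+F$ together with appropriate join edges has $\chi(G')=\chi(G)+\chi(F)$ (using that complete join adds chromatic numbers) and is $P_9$-free, and so that $\chi_c(G')=\chi(G')$ if and only if $\chi_c(G)=\chi(G)$.

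First I would establish the direction that is usually easy: if $\chi_c(G)=\chi(G)$, then one can concatenate an optimal connected ordering of the padding part with an optimal connected greedy ordering of $G$ (suitably interleaved through the join edges so that connectivity is maintained at the seam) to obtain a connected greedy coloring of $G'$ using exactly $\chi(G')$ colors. Next, for the converse, I would argue that any connected greedy coloring of $G'$ with $\chi(G')$ colors must, when restricted to the copy of $G$, behave like an optimal connected greedy coloring of $G$: the padding gadget $F$ and the join force the colors used on $F$ to occupy a fixed block of $\chi(F)$ colors under any optimal greedy coloring, so that the induced order on $V(G)$, after renaming the remaining $\chi(G)$ colors, is a connected ordering of $G$ producing an optimal coloring. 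This requires the gadget $F$ to be rigid enough — e.g. a clique, or a graph all of whose connected greedy colorings are optimal and "color-block-forcing" — so that no color is wasted and the seam does not allow cheating.

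The main obstacle, and where the real work lies, is simultaneously (a) making $G'$ genuinely $P_9$-free and (b) keeping tight control over $\chi_c(G')$. Fully joining a clique handles chromatic number but not long paths inside $G$ itself; conversely, replacing edges of $G$ by short gadgets to shorten induced paths tends to blow up $\chi_c$ in ways that are hard to track. I expect the resolution to be a construction in which $G$ is first massaged (e.g.\ via a "blow-up" or via attaching a dominating clique to $G$ in a way that every vertex of $G$ sees all but a bounded-diameter piece) so that any induced path meeting $V(G)$ in more than a constant number of vertices is impossible, combined with a padding clique to push the chromatic number up; verifying the $P_9$-freeness is then a finite case analysis on how an induced path can intersect $V(G)$, the clique, and the join. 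Controlling $\chi_c$ through this transformation — proving the "only if" direction that an optimal connected greedy coloring of $G'$ is forced to induce an optimal connected greedy ordering of $G$ — is the delicate step, and I would handle it by a careful analysis of which vertex can receive color $1$ first and how the greedy rule then propagates a fixed color block across the clique and the join before any vertex of $G$ is reached.
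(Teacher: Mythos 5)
Your reduction source and your transformation both run into trouble. The most concrete problem is the padding device itself: if you join a clique (or any nonempty graph) \emph{completely} to $V(G)$, every vertex of the added part becomes universal in $G'$, and any graph with a universal vertex $w$ satisfies $\chi_c=\chi$ automatically --- start the connected order at $w$ with color $1$; since $w$ dominates everything, any subsequent order is connected, so you may follow an optimal greedy order of $G-w$ and use $\chi(G)+1=\chi(G')$ colors. Thus the complete join destroys exactly the property you are trying to preserve: every padded instance becomes a ``yes'' instance regardless of whether $\chi_c(G)=\chi(G)$. Weakening the join to avoid this reintroduces long induced paths, and you give no concrete mechanism for killing the induced $P_9$'s that live entirely inside the arbitrary input $G$; you correctly identify this as the delicate step but leave it unresolved, so the proposal does not close.

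The paper avoids both difficulties by reducing from a different problem: $5$-colorability of $P_9$-free graphs with a universal vertex (\NP-complete by the result of Huang on $4$-coloring $P_9$-free graphs plus one universal vertex). Since the input is \emph{already} $P_9$-free, no path surgery on $G$ is needed. The gadget is a ``globally hard-to-color'' $P_9$-free graph $H$ with $\chi(H)=5$: no connected greedy coloring of $H$, from any start vertex and any start color, achieves $5$ colors. Identifying the universal vertex of $G$ with a suitable vertex $v$ of $H$ (one admitting no induced $P_8$ ending at it) keeps the result $P_9$-free, and the logic is inverted relative to yours: if $\chi(G)\le 5$ then $\chi(G')=5$ and the restriction of any CGC to $H$ is a $(v,\alpha)$-CGC, which must overshoot, so $\chi_c(G')>\chi(G')$; if $\chi(G)>5$ there is enough slack ($\chi(H)+1\le\chi(G)$) for the tool lemma of Benevides et al.\ to color $H$ within budget after starting at the universal vertex of $G$, so $\chi_c(G')=\chi(G')$. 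In short, the equality $\chi_c=\chi$ is made equivalent to \emph{failing} to be $5$-colorable, rather than being inherited from a $\chi_c$-instance. If you want to salvage your plan, you would need to switch to this kind of source problem and build the hard-to-color $P_9$-free gadget, which is where the paper's real work lies.
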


\begin{lemma}\label{lem:poly}
If $H$ is an induced subgraph of $P_5$ or of $P_4+K_1$, then $\chi_c(G)=\chi(G)$, for every $H$-free graph $G$.
\end{lemma}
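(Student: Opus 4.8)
By $\chi(G)\le\chi_c(G)$ it suffices to exhibit, for every connected graph $G$ that is $P_5$-free or $(P_4+K_1)$-free, a \emph{connected} ordering of $V(G)$ whose greedy coloring uses at most $k:=\chi(G)$ colors; this covers all the $H$ in the statement, since if $H$ is an induced subgraph of $P_5$ (resp.\ of $P_4+K_1$) then every $H$-free graph is $P_5$-free (resp.\ $(P_4+K_1)$-free), and a disconnected $G$ is read componentwise. If $k\le 2$ then $G$ is bipartite, and ordering $V(G)$ layer by layer along a breadth-first search is a connected ordering whose greedy coloring uses $\le 2$ colors (edges join only consecutive layers); so assume $k\ge 3$. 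If $G$ is $(P_4+K_1)$-free and $P_4$-free, then $G$ is a cograph, so $\Gamma(G)=\chi(G)$ (the $(\chi,\Gamma)$-perfect graphs being exactly the cographs, as recalled in the introduction), and then \emph{every} connected greedy coloring of $G$ uses exactly $\chi(G)$ colors; so we may also assume $G$ contains an induced $P_4$.

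\textbf{The uniform construction.} Pick a dominating set $D\subseteq V(G)$ together with an ordering $u_1,\dots,u_t$ of $D$ that is a connected ordering of $G[D]$; let $g_0$ be the greedy coloring of $G[D]$ it produces, and fix an optimal coloring $\psi$ of $G$. Order $V(G)$ as $u_1,\dots,u_t$ followed by the vertices of $R:=V(G)\setminus D$ in non-decreasing order of $\psi$-value (ties broken arbitrarily). Because $D$ is dominating, every $r\in R$ has a neighbour among $u_1,\dots,u_t$, so this is a connected ordering however $R$ is listed. I would then prove, by induction along the ordering, that \emph{if $g_0=\psi|_D$} the induced greedy coloring $g$ satisfies $g(v)\le\psi(v)$ for all $v$, hence uses at most $k$ colors: on $D$ one has $g=g_0=\psi|_D$ (the prefix $u_1,\dots,u_t$ sees only edges inside $G[D]$); and for $r\in R$ with $\psi(r)=\gamma$, every earlier neighbour of $r$ lies either in $D$ — hence has greedy-color $\psi(\cdot)\ne\gamma$ by properness — or is an $R$-vertex $w$ with $\psi(w)\le\gamma$, necessarily $\psi(w)<\gamma$ (properness again), so $g(w)\le\psi(w)<\gamma$ by induction; thus color $\gamma$ is free at $r$, giving $g(r)\le\gamma=\psi(r)$.

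\textbf{Producing $D$ and $\psi$.} Everything reduces to finding a dominating set $D$, a connected ordering of $G[D]$, and an optimal $\psi$ with $g_0=\psi|_D$. If $G$ has a dominating clique $D$ this is immediate: listing $D$ arbitrarily gives $g_0(u_i)=i$, and since $|D|=\omega(G[D])\le\omega(G)\le k$ we may permute the colors of any optimal $\psi$ so that $\psi(u_i)=i$, giving $g_0=\psi|_D$. For $P_5$-free graphs the classical theorem of Bacs\'o and Tuza says a connected $P_5$-free graph has a dominating clique or a dominating induced $P_3$; for the $(P_4+K_1)$-free graphs left to us, the induced $P_4$ found above is dominating, since in a $(P_4+K_1)$-free graph \emph{every} induced $P_4$ dominates. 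In these remaining cases every connected ordering of the dominating path $D$ has, as $g_0$, the proper $2$-coloring of that path given by its bipartition, with the two sides receiving colors $1$ and $2$; as we may choose which connected ordering of $D$ to use, $g_0=\psi|_D$ holds as soon as $\psi$ is an optimal coloring giving equal colors to the two endpoints $a,c$ of the dominating $P_3$ (resp.\ equal colors to $\{a,c\}$ and equal colors to $\{b,d\}$ for the dominating $P_4$ $a$--$b$--$c$--$d$), after which the construction yields $\chi_c(G)=\chi(G)$.

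\textbf{The main obstacle.} The crux — and the only place where $H$-freeness does real work beyond the bipartite/cograph cases — is the existence of such a ``bipartition-respecting'' optimal coloring; in general a graph can force two prescribed non-adjacent vertices to be colored differently in every optimal coloring, so the hypothesis must be used. The intended tool is a Kempe-chain argument: given an optimal $\psi$ with $\psi(a)=\alpha\ne\beta=\psi(c)$, take a shortest $a$--$c$ path alternating colors $\alpha,\beta$ inside the subgraph induced by the classes of $\alpha$ and $\beta$; this path is induced and, since $a\not\sim c$, has an even number $\ge 4$ of vertices, but on $\ge 6$ vertices it would contain an induced $P_6$, hence an induced $P_5$ and also an induced $P_4+K_1$ — impossible for either class. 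So the path has exactly $4$ vertices, and together with $b$ (and $d$) this yields a very restricted local picture (an induced $C_5$, or a graph on few vertices) on which the coloring can be re-routed to equalize the ends without a new color. The genuinely delicate part, which I expect to be where most of the work is, is to show that this local recoloring extends to a global optimal coloring — one anticipates this is possible exactly when $G$ has no dominating clique, i.e.\ exactly when we were forced into the $P_3$/$P_4$ case — and, for $(P_4+K_1)$-free graphs, to run the $\{a,c\}$-analysis and the $\{b,d\}$-analysis simultaneously.
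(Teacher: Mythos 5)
Your skeleton matches the paper's: reduce to a connected graph containing a $P_4$, find a dominating clique or dominating path (Bacs\'o--Tuza for $P_5$-free; any $P_4$ dominates in the $(P_4+K_1)$-free case), and then use the observation that if some $k$-coloring of $G$ restricts to a connected greedy coloring on a connected dominating set, then $\chi_c(G)\le k$ (your ``order $R$ by non-decreasing $\psi$-value'' argument is a correct proof of the paper's Proposition~\ref{prop:dominating}). The dominating-clique case is also handled correctly.

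However, there is a genuine gap at exactly the point you flag as ``the main obstacle,'' and moreover the specific plan you propose for closing it cannot work. You insist that the dominating path $D$ receive the $2$-coloring given by its bipartition in some \emph{optimal} coloring $\psi$, and you leave unproved that the local Kempe-chain recoloring near $a,c$ (and $b,d$) extends to a global optimal coloring. The paper's proof shows that this rigidity is the wrong target: it does \emph{not} always arrange for $P$ to be $2$-colored. Instead, starting from a greedy $\chi(G)$-coloring $f$, it performs color switches on $(1,2)$-components and, crucially, \emph{enlarges} the connected dominating set $X$ beyond $V(P)$ --- e.g.\ to $C\cup\{z\}$ or to $C\cup C'\cup V(P)$ where $C,C'$ are $(1,2)$-components --- so that $f$ restricted to $X$ is a connected greedy coloring of $G[X]$ even though $P$ itself still receives $3$ or $4$ colors. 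In the four-color case the paper explicitly handles the situation where the number of colors on $P$ \emph{cannot} be decreased by recoloring, which is precisely the situation your approach has no answer for. So the missing idea is not ``extend the local recoloring globally'' but rather ``relax the requirement on $P$ and absorb the relevant Kempe chains into the dominating set.'' As written, your argument establishes the lemma only for graphs with a dominating clique (plus the bipartite and cograph cases); the dominating-$P_3$/$P_4$ cases, which are where all the work lies, remain open in your write-up.
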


Before, we begin, we need one last definition. Given a vertex $v\in V(G)$, an integer $\alpha$ and a coloring $f$ of $G$, we say that $f$ is a \emph{$(v,\alpha)$-connected greedy coloring of $G$} (or $(v,\alpha)$-CGC for short) if there exists a connected order of $V(G)$, $\pi$, such that $\pi$ starts with $v$ and $f$ is obtained by coloring $v$ with $\alpha$ and applying the greedy algorithm on $\pi$. The similar is used for edge-coloring. Also, from now on we call a connected greedy coloring and a connected greedy edge-coloring by CGC and ECGC, for short	.


\section{$C_\ell$-free graphs}\label{sec:cycles}

Here, we prove Lemma~\ref{lem:CellFree}. It is known that deciding whether $\chi(G)\le 3$ is $\NP$-complete for planar graphs~\cite{GJS76}. Observe that this and the Four Color Theorem imply that the following problem is also $\NP$-complete for every fixed $k\ge 4$. It suffices to add $k-3$ universal vertices to a planar graph $G$ to obtain $G'$ such that $\chi(G)\le 3$ if and only if $\chi(G') \le k$. 

\ifdefined\comment\comment{
Acho que  reducao no artigo do LATIN serve pra mostrar que o problema eh $\NP$-completo para grafos livres de $C_\ell$, para todo $\ell\neq 3,5$. Por isso, aqui me restringi somente a esses casos particulares.
}\else\fi

\begin{myproblem}
  \problemtitle{\texttt{$k$-COL $(k+1)$-COLORABLE}}
  \probleminput{A graph $G = (V,E)$ with a universal vertex such that $\chi(G)\le k+1$.}
  \problemquestion{$\chi(G)\le k$?}
\end{myproblem}


Given an instance $G$ of the problem above, we construct a $(k+1)$-chromatic $C_\ell$-free graph $G^{**}$ such that $\chi(G)\le k$ if and only if $\chi_c(G^{**})= k+1$. Lemma~\ref{lem:CellFree} follows because Problem \texttt{$k$-COL $(k+1)$-COLORABLE} is $\NP$-complete for every $k\ge 4$. We first construct a gadget that will admit cycles of undesired length, and afterwards we replace some of the edges in order to get rid of such cycles. Start with three disjoint cliques $U,V,M$ each of size $k$, and let $w$ be a vertex  of $M$. Obtain $G_k$ by adding vertices $u,u',v,v'$ and making $u,u'$ complete to $U$, $v,v'$ complete to $V$, $u,v$ complete to $M\setminus\{w\}$, and $u',v'$ complete to $\{w\}$. See Figure~\ref{fig:Cellgadget} for the construction of $G_3$. 

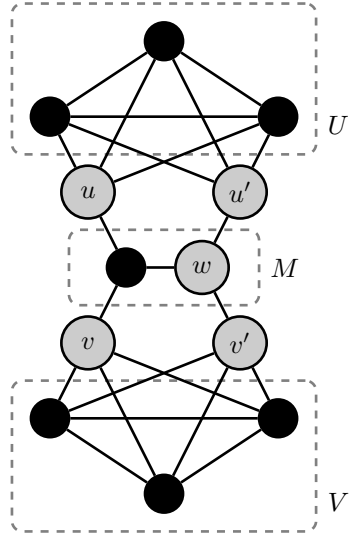
\begin{figure}[thb]
\begin{center}
  \begin{tikzpicture}[scale=1]
  \pgfsetlinewidth{1pt}

  \tikzset{vertex/.style={circle, minimum size=0.7cm, fill=black!20, draw, inner sep=1pt}}
  \tikzset{simple/.style={circle, minimum size=0.5cm, fill=black, draw, inner sep=1pt}}

   \node [simple] (u1) at (-1.5,2){};
   \node [simple] (u2) at (1.5,2){};
   \node [simple] (u3) at (0,3){};
   
   \node [label=350:$U$, draw=black!50, rounded corners, dashed, minimum width=4cm, minimum height=2cm] at (0,2.5) {};

  \node [vertex] (u) at (-1,1){$u$};
  \node [vertex] (up) at (1,1){$u'$};
  
  \node [simple] (a) at (-0.5,0){};
   \node [vertex] (b) at (0.5,0){$w$};
   \node [label=0:$M$,draw=black!50, rounded corners, dashed, minimum width=2.5cm, minimum height=1cm] at (0,0) {};
   
   \node [vertex] (v) at (-1,-1){$v$};
  \node [vertex] (vp) at (1,-1){$v'$};
   
\node [simple] (v1) at (-1.5,-2){};
   \node [simple] (v2) at (1.5,-2){};
   \node [simple] (v3) at (0,-3){};
   
   \node [label=350:$V$,draw=black!50, rounded corners, dashed, minimum width=4cm, minimum height=2cm] at (0,-2.5) {};

	\draw (u) edge (a) 	(up) edge (b)	(v) edge (a)	(vp) edge (b)	(a) edge (b)
	(u1) edge (u2)   (u1) edge (u3)   (u2) edge (u3)
	(v1) edge (v2)   (v1) edge (v3)   (v2) edge (v3)
	(u) edge (u1)   (u) edge (u2)   (u) edge (u3)
	(up) edge (u1)   (up) edge (u2)   (up) edge (u3)
	(v) edge (v1)   (v) edge (v2)   (v) edge (v3)
	(vp) edge (v1)   (vp) edge (v2)   (vp) edge (v3);
	
   \draw[black] (0,-1.2) node [below] {};

  \end{tikzpicture}
\caption{Gadget $G_3$.}
\label{fig:Cellgadget}
\end{center}
\end{figure}

Now, given a $(k+1)$-colorable graph $G$ that contains a dominant vertex, denote by $G^*$ the graph obtained from $G$ by appending a copy of $G_k$ on each vertex of $G$, identifying on vertex $u$ of $G_k$. We first prove that $\chi(G)\le k$ if and only if $\chi_c(G^*) = \chi(G^*) = k+1$, and then we show how to get rid of the undesired cycles. This is actually the proof presented in~\cite{BCD+14}. Note that, because deciding $\chi(G)\le k$ for a $C_\ell$-free graph is $\NP$-complete for every fixed $k\ge 3$ and $\ell\ge 3$~\cite{KL.07} and by previously mentioned aspects, this gives us that \texttt{CGC-Decision} is $\NP$-complete even when restricted to $C_\ell$-free $k$-chromatic graphs, for every ($\ell=4$ or $\ell\ge 6$) and $k\ge 4$. For the first part of the proof, it is essential to understand the following properties of graph $G_k$.

\begin{lemma}
Let $G_k$ be the graph obtained as above, where $k$ is a positive integer, $k\ge 3$.
\begin{enumerate}
  \item $\chi(G_k)=k+1$;
  \item In every $(k+1)$-coloring of $G_k$, vertices $u,v,u',v'$ receive the same color;
  \item For every $x\in V(G_k)$ and every $\alpha\in\{1,\ldots,k\}$, we get that vertices $u,v,u',v'$ receive color at most $k$ in every $(x,\alpha)$-CGC of $G_k$ with $k+1$ colors.
\end{enumerate}
\end{lemma}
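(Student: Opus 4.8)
The plan is to prove parts (1) and (2) by exploiting the several copies of $K_{k+1}$ living inside $G_k$, and then to deduce (3) from (2) by a short analysis of connected orders. It is convenient to first record the neighbourhoods $N(u)=U\cup(M\setminus\{w\})$, $N(u')=U\cup\{w\}$, $N(v)=V\cup(M\setminus\{w\})$, $N(v')=V\cup\{w\}$, and to note that deleting $\{u,u',v,v'\}$ from $G_k$ leaves the disjoint union of the cliques $U,V,M$.

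For (1), the lower bound $\chi(G_k)\ge k+1$ is immediate since $U\cup\{u\}$ induces a $K_{k+1}$; for the upper bound I would colour each of $U,V,M$ by a bijection onto $\{1,\dots,k\}$, choosing the one on $M$ so that $w$ gets colour $1$, and give all of $u,u',v,v'$ colour $k+1$, which the neighbourhoods above show is proper. For (2), take any colouring $\psi$ of $G_k$ with $k+1$ colours. Since $U\cup\{u\}$ and $U\cup\{u'\}$ both induce $K_{k+1}$, both $\psi(u)$ and $\psi(u')$ equal the unique colour missing from $\psi(U)$, so $\psi(u)=\psi(u')$; likewise $\psi(v)=\psi(v')$. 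Let $c$ be the colour missing from $\psi(M)$. Then $M\setminus\{w\}$ misses exactly the two colours $c$ and $\psi(w)$; since $u$ is complete to $M\setminus\{w\}$ we get $\psi(u)\in\{c,\psi(w)\}$, and since $u'$ is adjacent to $w$ we get $\psi(u)=\psi(u')\neq\psi(w)$, whence $\psi(u)=c$. The same reasoning gives $\psi(v)=c$, so all four vertices receive colour $c$.

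For (3), suppose toward a contradiction that some $(x,\alpha)$-CGC $f$ of $G_k$ uses $k+1$ colours yet assigns colour $k+1$ to one of $u,u',v,v'$. By (2), $f$ assigns colour $k+1$ to all four of them; in particular $x\notin\{u,u',v,v'\}$, so $x$ lies in one of $U,V,M$. Fix a connected order $\pi$ realizing $f$. In the greedy procedure a vertex receives colour $k+1$ only if all of $1,\dots,k$ already occur on its earlier neighbours; since $|N(u)\setminus U|=|M\setminus\{w\}|=k-1<k$ this forces at least one vertex of $U$ to precede $u$ in $\pi$, and since $|N(u')\setminus U|=1<k$ (using $k\ge 3$) at least one vertex of $U$ precedes $u'$; symmetrically, at least one vertex of $V$ precedes each of $v$ and $v'$. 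Now choose $C\in\{U,V\}$ with $x\notin C$ — possible because $U$ and $V$ are disjoint — and let $\{b,b'\}$ be the matching pair among $\{u,u'\}$ and $\{v,v'\}$. Let $a$ be the first vertex of $C$ appearing in $\pi$. Since $a\neq x$, $a$ has an earlier neighbour in $\pi$, and since every vertex of $N(a)\cap C$ comes after $a$, that earlier neighbour must be $b$ or $b'$; but the previous sentence shows that some vertex of $C$ precedes each of $b$ and $b'$, so $a$ precedes both of them — a contradiction.

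The routine parts are the neighbourhood bookkeeping and the verification of the explicit colouring in (1); the crux is (3), and the key point there is that (2) is exactly what lets us assume all four of $u,u',v,v'$ are coloured $k+1$ simultaneously, after which the small sizes of $M\setminus\{w\}$ and $\{w\}$ make it impossible for a connected order ever to reach the clique $U$ (or $V$) at all.
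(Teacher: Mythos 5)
Your proof is correct and follows essentially the same route as the paper's: parts (1)–(2) come from the $K_{k+1}$'s formed with $U$ and $V$ and the colours of $M$, and part (3) rests on the observation that the clique $C\in\{U,V\}$ not containing $x$ can only be entered through its two attached special vertices, which cannot accumulate $k$ coloured neighbours before $C$ is entered. The paper phrases (3) directly (the first of $v,v'$ reached has at most $k-1$ coloured neighbours, hence colour at most $k$) whereas you argue by contradiction, but the underlying idea is identical.
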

\begin{proof}
The first and second properties can be easily verified: it suffices to see that the colors used in $\{u,u',v,v'\}$ cannot be used in any of the cliques $U,V,M$. For the last property, consider any connected order starting with $x$ and suppose, without loss of generality, that $x\notin V\cup\{v,v'\}$. Then, no matter which one between $v$ and $v'$ gets colored first, we get that it will have at most $k-1$ colored neighbors and, therefore, cannot have color bigger than $k$. The property then follows from Property 2.\end{proof}

Now, observe that if $\chi_c(G^*)=k+1$, then Property 3 trivially implies that $\chi(G)\le k$. On the other hand, if $\chi(G)=k$, because $G$ has an universal vertex, we get that $\chi_c(G)=k$. A CGC of $G^*$ with $k+1$ colors can be easily constructed from a CGC of $G$ with $k$ colors. This proves that $\chi(G)\le k$ if and only if $\chi_c(G^*)=k+1$, as we wanted to show.

\ifdefined\comment\comment{
Eu acho que bastava essa segunda parte na verdade, pois ela reduz o problema de decidir $\chi_c(G)=k$ para $G$ $k$-cromático no problema de decidir $\chi_c(G)=k$ para $G$ $k$-cromático sem ciclos de tamanho 3 ou sem ciclos de tamanho 5. FALSO Eh essencial que as arestas incidentes nos vertices especias de $G_k$ nao sejam substituidas, pois senao a propriedade 3 anterior deixa de valer.
}\else\fi

Now, we construct gadgets that will replace the edges of $G^*$ so as to ensure that the obtained graph has no cycles of length $\ell$, for $\ell=3$ and $\ell=5$. In fact, we need a different gadget for each case because, when avoiding cycles of length 3, we end up creating cycles of length 5, and vice-versa. First, we show the gadget necessary for the case $\ell=5$.

Let $G^5_k(p,q)$ be obtained as follows. Start with a $P_4$, $(p,x,y,z)$, a clique $P$ of size $k-1$, and a clique $Q$ of size $k$. Let every vertex in $\{p,x,y,z\}$ be complete to $P$, and let $z$ be complete to $Q$. Finally, add vertex $q$ and make it complete to $Q$. Figure~\ref{fig:G53pq} depicts $G^5_3(p,q)$.

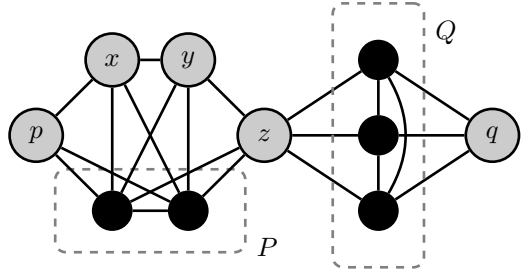
\begin{figure}[thb]
\begin{center}
  \begin{tikzpicture}
  \pgfsetlinewidth{1pt}

  \tikzset{vertex/.style={circle, minimum size=0.7cm, fill=black!20, draw, inner sep=1pt}}
  \tikzset{simple/.style={circle, minimum size=0.5cm, fill=black, draw, inner sep=1pt}}
      
    \node [vertex] (p) at (-3,0){$p$};
    \node [vertex] (x) at (-2,1){$x$};
    \node [vertex] (y) at (-1,1){$y$};
    \node [vertex] (z) at (0,0){$z$};
    \node [simple] (p1) at (-1,-1){};
    \node [simple] (p2) at (-2,-1){};
    \node [label=350:$P$,draw=black!50, rounded corners, dashed, minimum width=2.5cm, minimum height=1.1cm] at (-1.5,-1) {};
    
    \node [vertex] (q) at (3,0){$q$};
    \node [simple] (q1) at (1.5,1){};
    \node [simple] (q2) at (1.5,-1){};
    \node [simple] (q3) at (1.5,0){};    
    \draw [-] (q1) to [out=300,in=60] (q2);
    
    \draw (p) edge (x)  (x) edge (y)   (y) edge (z)  (z) edge (q3)
    (z) edge (q1) (z) edge (q2) (z) edge (p1) (z) edge (p2)
    (q) edge (q1) (q) edge (q2) (q) edge (q3)
    (p) edge (p1) (p) edge (p2)
    (x) edge (p1) (x) edge (p2)
    (y) edge (p1) (y) edge (p2)
    (p1) edge (p2) (q1) edge (q3) (q2) edge (q3);

      	\node [label=60:$Q$,draw=black!50, rounded corners, dashed, minimum height=3.5cm, minimum width=1.2cm] at (1.5,0) {};
  \end{tikzpicture}
\caption{Gadget to replace the edges on $G^*$ when $\ell=5$; denoted by $G^5_3(p,q)$.}
\label{fig:G53pq}
\end{center}
\end{figure}

Now, let $G^{**}$ be obtained from $G^*$ by replacing every edge $pq$ with a copy of $G^5_k(p,q)$, except the edges of $G^*$ incident to the copies of $\{u,v,u',v'\}$ in the gadgets of type $G_k$. Observe that $G^5_k(p,q)$ has no induced cycles of length~5; also, because the distance between $p$ and $q$ in $G_k(p,q)$ is~4, we get that $G^{**}$ also does not have cycles of length~5. Now, we prove that $\chi_c(G^*)=k+1$ if and only if $\chi_c(G^{**})=k+1$, thus finishing the proof of the case $\ell=5$. For this, we need the following properties to hold.

\begin{lemma}
Let $G^5_k(p,q)$ be obtained as above, where $k$ is a positive integer, $k\ge 3$.
\begin{enumerate}
\item[(i)] $\chi(G^5_k(p,q))=k+1$ and in every $(k+1)$-coloring of $G^5_k(p,q)$, vertices $p$ and $q$ receive distinct colors;

\item[(ii)] For every $\alpha,\beta\in\{1,\ldots,k+1\}$, $\alpha\neq\beta$, there exists a $(p,\alpha)$-CGC of $G^5_k(p,q)$ in which $q$ is colored with $\beta$, and a $(q,\beta)$-CGC in which $p$ is colored with $\alpha$.
\label{lem:Gkpq}
\end{enumerate}
\end{lemma}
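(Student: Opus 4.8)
First I would record how cliques sit inside $G^5_k(p,q)$: since $P$ is a clique of size $k-1$ and each of $p,x,y,z$ is complete to $P$, each $\{v\}\cup P$ with $v\in\{p,x,y,z\}$ is a $k$-clique, while $\{z\}\cup Q$ and $\{q\}\cup Q$ are $(k+1)$-cliques (recall $z$ and $q$ are each complete to the $k$-clique $Q$). The $(k+1)$-clique gives $\chi(G^5_k(p,q))\ge k+1$ at once, and the reverse inequality follows from an explicit colouring, e.g.\ colour $P$ with $1,\dots,k-1$, colour $Q$ with $1,\dots,k$, and set $p=y=k$, $x=z=q=k+1$ (legal since $x$ is adjacent to neither $z$ nor $q$, and $y$ is not adjacent to $Q$). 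For the ``distinct colours'' claim, I would take an arbitrary $(k+1)$-colouring $f$: exactly two colours, say $a$ and $b$, are missing on the clique $P$, so the induced $P_4$ $(p,x,y,z)$, all of whose vertices are complete to $P$, uses only $a$ and $b$; hence $f$ alternates along it and $f(p)\ne f(z)$. Moreover $\{z\}\cup Q$ is a rainbow $(k+1)$-clique, so $Q$ omits exactly the colour $f(z)$, and since $q$ is complete to $Q$ we must have $f(q)=f(z)$. Combining, $f(p)\ne f(q)$.

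\textbf{Part (ii).} Fix $\alpha\ne\beta$. The argument above shows that a $(k+1)$-colouring with $f(p)=\alpha$ and $f(q)=\beta$ is essentially forced: $f(z)=f(q)=\beta$, so $\{a,b\}=\{\alpha,\beta\}$, alternation gives $f(x)=\beta$ and $f(y)=\alpha$, and then $P$ must get $\{1,\dots,k+1\}\setminus\{\alpha,\beta\}$ and $Q$ must get $\{1,\dots,k+1\}\setminus\{\beta\}$. So the plan is to exhibit connected orders, starting respectively at $p$ (coloured $\alpha$) and at $q$ (coloured $\beta$), along which the greedy algorithm reproduces exactly this colouring. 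For the $(p,\alpha)$-CGC I would use the order: $p$; then a prefix $P'$ of the clique $P$; then $x$; then $P\setminus P'$; then $y$; then $z$; then $Q$; then $q$. Since greedy colours a clique with the smallest available colours, after $p$ gets $\alpha$ the block $P'$ gets the $|P'|$ smallest colours other than $\alpha$; taking $|P'|=\beta-1$ when $\alpha>\beta$ and $|P'|=\beta-2$ when $\alpha<\beta$ makes $\{p\}\cup P'$ cover exactly $\{1,\dots,\beta-1\}$ while leaving $\beta$ free (a short check, using $\beta\le k$ whenever $\alpha>\beta$, shows $0\le|P'|\le k-1$ in every case). Then greedy is forced to give $x$ the colour $\beta$, and from there each step is forced, because the coloured neighbourhood of every subsequent vertex contains a clique leaving a unique admissible colour: $P\setminus P'$ completes $P$ to $\{1,\dots,k+1\}\setminus\{\alpha,\beta\}$, then $y$ gets $\alpha$, $z$ gets $\beta$, $Q$ gets $\{1,\dots,k+1\}\setminus\{\beta\}$, and finally $q$ gets $\beta$. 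Connectivity is clear since each block after the first is joined to a previous vertex: recall $p,x,y,z$ are complete to $P$ and $z,q$ are complete to $Q$. The $(q,\beta)$-CGC is the mirror image: $q$ (coloured $\beta$); then $Q$ (forced to $\{1,\dots,k+1\}\setminus\{\beta\}$); then $z$ (forced to $\beta$); then a prefix $P'$ of $P$ sized to make $\{z\}\cup P'$ cover exactly $\{1,\dots,\alpha-1\}$; then $y$ (now forced to $\alpha$); then $P\setminus P'$; then $x$ (forced to $\beta$); then $p$ (forced to $\alpha$).

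\textbf{Main obstacle.} The genuinely delicate point is forcing $x$ (respectively $y$) to take the \emph{arbitrary} colour $\beta$ (respectively $\alpha$): this vertex is adjacent to everything coloured before it, so we cannot assign its colour directly and must instead arrange the partially-coloured clique $P$ so that all colours below $\beta$ are already used while $\beta$ is not. Getting the size of $P'$ right, and verifying it never exceeds $|P|=k-1$ (which is precisely where a hypothesis like ``$\beta\le k$ when $\alpha>\beta$'' is used), is what forces the case split on whether $\alpha<\beta$ or $\alpha>\beta$; everything else is routine bookkeeping about greedy colouring on cliques.
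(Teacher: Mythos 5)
Your proposal is correct and follows essentially the same route as the paper: the lower bound from the $(k{+}1)$-clique $Q\cup\{z\}$, the forced equalities $f(q)=f(z)$ and the alternation of the two colours missing from $P$ along $(p,x,y,z)$ for part (i), and for part (ii) a connected order that colours a carefully sized prefix of $P$ so that the greedy algorithm is forced to give $x$ (resp.\ $y$) the colour $\beta$ (resp.\ $\alpha$), with the same case split on $\alpha<\beta$ versus $\alpha>\beta$. The only cosmetic difference is that you finish colouring $P$ before $y$ and $z$ and write out the reverse order explicitly, whereas the paper interleaves the remainder of $P$ with $y,z$ and appeals to the symmetry of $p$ and $z$ with respect to $P\cup\{x,y\}$.
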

\begin{proof}
To see that $\chi(G^5_k(p,q))=k+1$ just observe that $\omega(G^5_k(p,q)) = k+1$ and that a $(k+1)$-coloring of $G^5_k(p,q)$ can be obtained by giving colors $\{3,\ldots,k+1\}$ to $P$, color 2 to $\{p,y\}$, color 1 to $\{x,z,q\}$, and colors $\{2,\ldots,k+1\}$ to $Q$. Now, let $f$ be a $(k+1)$-coloring of $G^5_k(p,q)$. Because $z$ and $q$ are complete to $Q$ of size $k$, they receive the same color; similarly, $x$ and $z$ also receive the same color, and since $p$ is adjacent to $x$, Property (i) follows. Now, given $\alpha,\beta\in\{1,\ldots,k+1\}$, $\alpha\neq\beta$, we construct the desired $(p,\alpha)$-CGC as follows: color $p$ with $\alpha$ and enough vertices of $P$ so that colors $1$ through $\beta-1$ appear in $P\cup \{p\}$; then, color $x$ with $\beta$. If $\alpha\in\{1,\ldots,\beta-1\}$, color $y$ with $\alpha$, $z$ with $\beta$, and finish coloring $P$. Otherwise, color $z$ with $\beta$, some vertices of $P$ with $\{\beta+1,\ldots,\alpha-1\}$, $y$ with $\alpha$, and finish coloring $P$. It is easy to see that this can be extended to $Q\cup\{q\}$ as desired, and also that we could have started in $q$ and made our way to $p$ (this is because vertices $p$ and $z$ are symmetric with relation to $P\cup\{x,y\}$).
\end{proof}

Observe that Property (ii) makes it possible to construct a CGC of $G^{**}$, given a CGC of $G^*$; hence, if $\chi_c(G^{*})=k+1$, then $\chi_c(G^{**})=k+1$. Also, note that Property (i) and the fact that the edges incidents to the copies of vertices $\{u,u',v,v'\}$ were not replaced ensures that Properties (1)-(3) still hold on the copies of $G_k$. Therefore, if $\chi_c(G^{**})=k+1$, then Property (3) ensures us that $\chi(G) \le k$, and the previous argument gives us that $\chi_c(G^*)=k+1$.

Now, we present the gadget needed for the case $\ell=3$. We prove that Lemma~\ref{lem:Gkpq} also holds for the consutrcted gadget, thus finishing our proof since the same arguments can be applied. For this, we introduce an operation on graphs. Let $H$ be any graph. The \emph{double myscielskian of $H$}  is the graph $H^*$ obtained from $H$ as follows (observe Figure~\ref{fig:G2pq} to see the operation applied to the $P_4$ $(p,x,y,q)$): add two copies $V_1$ and $V_2$ of $V(H)$, and denote the copy of $w$ in $V_i$ by $w_i$; for every edge $yz\in E(G)$, add edges $y_iz$ and $yz_i$, for $i=1$ and $i=2$; finally, add two adjacent vertices $x_1$ and $x_2$ and make $x_i$ complete to $V_i$, for $i=1$ and $i=2$. It is well known that the Mycieslki of a triangle-free $h$-chromatic graph produces a triangle-free $(h+1)$-chromatic graph~\cite{M55}. Also, if $\chi(H)\ge 2$, an $(h+1)$-coloring of $H^*$ can be constructed from an optimal coloring of $H$ by coloring $V_1\cup V_2$ with $h+1$ and $x_i$ with $i$, $i\in \{1,2\}$; hence $\chi(H^*)=\chi(H)+1$. Now, let $G^3_k(p,q)$ be obtained by applying $k-1$ times the double mycielskian operation, starting with the $P_4$ $(p,x,y,q)$. By what was said before, we get that $\chi(G^3_k(p,q)) = k+1$ and that $G_k(p,q)$ is triangle-free. Also, since $p$ and $q$ are not adjacent, the graph $G^{**}$ constructed as before is also triangle-free. It remains to prove Properties (i) and (ii).

\begin{figure}[thb]
\begin{center}
  \begin{tikzpicture}
  \pgfsetlinewidth{1pt}
  \tikzset{vertex/.style={circle, minimum size=0.7cm, fill=black!20, draw, inner sep=1pt}}

    \node [vertex] (p) at (0,0){$p$};
    \node [vertex] (x) at (0,1){$x$};
    \node [vertex] (y) at (0,2){$y$};
    \node [vertex] (q) at (0,3){$q$};

    \node [vertex] (p1) at (-2,0){$p_1$};
    \node [vertex] (x1) at (-2,1){$x_1$};
    \node [vertex] (y1) at (-2,2){$y_1$};
    \node [vertex] (q1) at (-2,3){$q_1$};
    \node [vertex] (t1) at (-4,1.5){$t_1$};
    \node [label=90:$V_1$,draw=black!50, rounded corners, dashed, minimum height=4cm, minimum width=1.2cm] at (-2,1.5) {};

    \node [vertex] (p2) at (2,0){$p_2$};
    \node [vertex] (x2) at (2,1){$x_2$};
    \node [vertex] (y2) at (2,2){$y_2$};
    \node [vertex] (q2) at (2,3){$q_2$};
    \node [vertex] (t2) at (4,1.5){$t_2$};  
    \node [label=90:$V_2$,draw=black!50, rounded corners, dashed, minimum height=4cm, minimum width=1.2cm] at (2,1.5) {};
  
    \draw (p) edge (x) (x) edge (y) (y) edge (q);
    \foreach \i/\label in {1, 2}{
    	\draw (p\i) edge (x) (p) edge (x\i) (x\i) edge (y) (x) edge (y\i) (y\i) edge (q) (y) edge (q\i)
    	(t\i) edge (p\i)  (t\i) edge (x\i)  (t\i) edge (y\i)  (t\i) edge (q\i);
    }
    \draw [-] (t1) to [out=270,in=270] (t2);

  \end{tikzpicture}
\caption{Edge gadget for $C_3$-free graphs. In the figure, only one application of the double Mycielski is made, i.e., it depicts the graph $G^3_2(p,q)$.}
\label{fig:G2pq}
\end{center}
\end{figure}
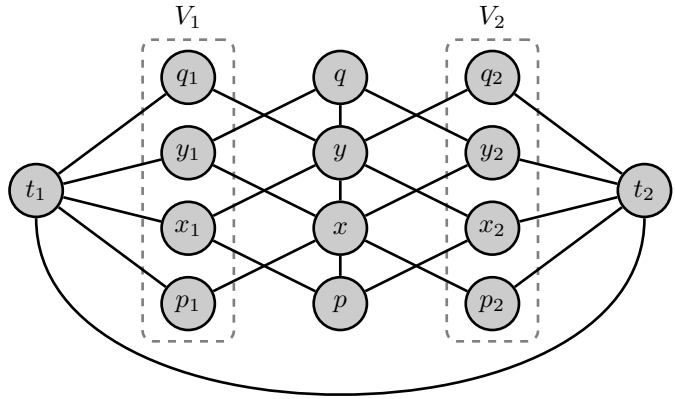

\begin{lemma}
Let $G^3_k(p,q)$ be obtained as above, where $k$ is a positive integer, $k\ge 3$.
\begin{enumerate}
\item[(i)] $\chi(G^3_k(p,q))=k+1$ and in every $(k+1)$-coloring of $G^3_k(p,q)$, vertices $p$ and $q$ receive distinct colors;

\item[(ii)] For every $\alpha,\beta\in\{1,\ldots,k+1\}$, $\alpha\neq\beta$, there exists a $(p,\alpha)$-CGC of $G^3_k(p,q)$ in which $q$ is colored with $\beta$, and a $(q,\beta)$-CGC in which $p$ is colored with $\alpha$.
\label{lem:Gkpq}
\end{enumerate}
\end{lemma}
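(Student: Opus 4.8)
The plan is to prove the (formally stronger) statement for all $k\ge 1$ by induction on $k$, using that $G^3_k(p,q)=M^{k-1}(P_4)$ where $M$ is the double Mycielskian and $P_4=(p,x,y,q)$. When $M$ is applied to a graph $H$ I write $V_1,V_2$ for the two copies of $V(H)$, $w_i$ for the copy of $w$ in $V_i$, and $t_1,t_2$ for the two apex vertices (called $x_1,x_2$ in the definition, renamed to avoid clashing with the path, cf.\ Figure~\ref{fig:G2pq}); recall that $t_i$ is complete to $V_i$, $t_1t_2\in E$, $V_1\cup V_2$ is independent, and $N_{M(H)}(w_i)=N_H(w)\cup\{t_i\}$. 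I will freely use that $\chi(G^3_k(p,q))=k+1$ (already shown) and that any automorphism of $H$ extends to $M(H)$ (act identically on $V(H)$, copy-wise on $V_1\cup V_2$, fixing $t_1,t_2$); in particular the automorphism of $P_4$ exchanging $p\leftrightarrow q$ lifts to one of $G^3_k(p,q)$, so for (ii) it suffices to build, for every $\alpha\neq\beta$, a $(p,\alpha)$-CGC with $q$ coloured $\beta$ — applying the lifted automorphism and relabelling the ordered pair then yields the required $(q,\beta)$-CGC with $p$ coloured $\alpha$.

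For (i) I would argue by induction as follows. The base case $P_4$ is immediate since its endpoints lie at odd distance. For the step, put $H=G^3_{k-1}(p,q)$, assume every $k$-colouring of $H$ separates $p$ and $q$, and let $f$ be a $(k+1)$-colouring of $M(H)$ with $f(p)=f(q)$. Choosing $i\in\{1,2\}$ with $f(t_i)\neq f(p)$ (possible since $f(t_1)\neq f(t_2)$), set $g(w)=f(w)$ when $f(w)\neq f(t_i)$ and $g(w)=f(w_i)$ otherwise. Then $g$ is a proper colouring of $H$ (an edge $wz$ with $f(w)=f(t_i)$ gives $g(w)=f(w_i)\neq f(z)$ through the edge $w_iz$), it omits colour $f(t_i)$ (every $w_i$ is adjacent to $t_i$), hence is a $k$-colouring, and $g(p)=f(p)=f(q)=g(q)$ — a contradiction. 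Together with $\chi(G^3_k(p,q))=k+1$, this gives (i).

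For (ii) I would first dispatch the easy case. The base case $P_4$ works with the order $(p,x,y,q)$, which greedily yields $p\mapsto\alpha,\ x\mapsto\beta,\ y\mapsto\alpha,\ q\mapsto\beta$ when $\{\alpha,\beta\}=\{1,2\}$. For the step with $H=G^3_{k-1}(p,q)$ and $\alpha,\beta\le k$: take a connected order $\pi_H$ of $V(H)$ realising a $(p,\alpha)$-CGC of $H$ with $q$ coloured $\beta$ (since $\chi(H)=k$ and $\alpha\le k$, it uses exactly the colours $\{1,\dots,k\}$), and extend it by appending one $v_1\in V_1$ with $N_H(v)\neq\emptyset$ ($H$ has no isolated vertex), then $t_1$, then the rest of $V_1$, then $t_2$, then $V_2$; this is a connected order. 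Greedy along it colours $V(H)$ exactly as $\pi_H$ does (a vertex of $V(H)$ has all earlier neighbours in $V(H)$), so $p\mapsto\alpha$ and $q\mapsto\beta$; each $t_i$ is coloured when exactly one of its neighbours is coloured, hence receives a colour in $\{1,2\}$; and each $w_i$ sees only its neighbours in $N_H(w)\cup\{t_i\}$, all carrying colours in $\{1,\dots,k\}$, so receives a colour in $\{1,\dots,k+1\}$. Thus the colouring uses only $\{1,\dots,k+1\}$, hence exactly $k+1$ colours by $\chi(M(H))=k+1$, which settles the case $\alpha,\beta\le k$.

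The hard case of (ii) — and the step I expect to be the main obstacle — is $\alpha=k+1$ or $\beta=k+1$. Now $q$ (resp.\ $p$) cannot be coloured during a sweep of $V(H)$, since $H$ has only $k$ colours available; the connected order must instead be threaded first through one copy-layer $V_i$ together with its apex $t_i$, so as to fabricate among the already-coloured neighbours of $q$ exactly the palette $\{1,\dots,k\}$ that forces greedy to place $k+1$ on $q$ (resp.\ to let the colouring propagate past the forced colour $k+1$ at $p$), while never using more than $k+1$ colours and never breaking connectivity of the order. What should make this possible is that in $M(H)$ each $v\in N_H(q)$ contributes the whole triple $v,v_1,v_2$ to $N(q)$ — three vertices that can be steered to three distinct colours — and that $t_i$ is a freely placeable extra colour source adjacent to all of $V_i$; the delicate point is to pin $\alpha$ at $p$, $\beta$ at $q$, connectivity of $\pi$, and the $(k+1)$-colour bound all at once, which I would carry out by an explicit construction in $G^3_2(p,q)$ and then an inductive lift through the Mycielskian layers. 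Once (ii) is established for $G^3_k(p,q)$, the equivalence ``$\chi_c(G^{**})=k+1$ iff $\chi(G)\le k$'' follows exactly as in the case $\ell=5$, completing the case $\ell=3$.
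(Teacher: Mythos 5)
Your part (i) and the case $\alpha,\beta\le k$ of part (ii) are correct and essentially identical to the paper's argument: the same contraction of one Mycielskian layer (recolour the $f(t_i)$-class of $V(H)$ by the colours of its $V_i$-copies) for (i), and the same ``colour $V(H)$ first, then the apexes and copy-layers'' extension for the easy half of (ii). The symmetry reduction from the $(q,\beta)$-CGC to the $(p,\alpha)$-CGC via the lifted automorphism is also fine and is used implicitly in the paper.

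However, there is a genuine gap: the case $\alpha=k+1$ or $\beta=k+1$ of Property (ii) is not proved — you only describe what a construction ``should'' look like and defer it to ``an explicit construction in $G^3_2(p,q)$ and then an inductive lift.'' This is precisely the hard and essential part of the lemma (without it the reduction cannot route an arbitrary pair of distinct colours through an edge-gadget), and it is where all the technical work in the paper's proof lives. Concretely, the paper handles $\beta=k+1$ by taking a $(p,\alpha')$-CGC $f$ of $G^3_{k-1}(p,q)$ with $q$ coloured $k$ (where $\alpha'=\alpha-1$ if $\alpha>1$, and $\alpha'=1$ with an auxiliary colour $j=2$ if $\alpha=1$), then ordering $p$, one copy $w_1$ of a neighbour of $p$, $t_1$, all of $V_1$, and only afterwards replaying the old connected order on $V(H)\setminus\{p\}$; the pre-coloured layer $V_1$ supplies each $v_h$ with neighbours of colours $1$ and $2$, which is what forces every colour of $f$ to shift up by exactly one, so that $q$ lands on $k+1$. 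Verifying that this shift is consistent with greediness (in particular that $f'(v_h)\ge f(v_h)$ and that the colour-$1$ class is handled correctly when $j=2$) is a nontrivial check that your sketch does not attempt. Until this case is carried out, the lemma — and hence the $C_3$-free part of Lemma~\ref{lem:CellFree} — is not established.
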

\begin{proof}
Let $t_1,t_2,V_1,V_2$ be the additional vertices added at the last application of the double Myscielski operation. To see that $\chi(G^3_k(p,q))=k+1$, recall that the Mycielski operation increases the chromatic number by one. Hence, the double Mycielski operation increases by at least one; the fact follows because a $(k+1)$-coloring can be obtained from a $k$-coloring of $G^3_{k-1}(p,q)$ by giving a new color to $V_1\cup V_2$ and using colors 1 and 2 in $t_1,t_2$. 
Now, we prove the second part of Property (i) by induction on $k$. Note that it trivially holds for the initial $P_4$. So, suppose by contradiction that $f$ is a $(k+1)$-coloring of $G^3_k(p,q)$ such that $f(p)= f(q)=1$. Because $f(t_1)\neq f(t_2)$, we can suppose that $f(t_1)=j\neq 1$. Note that, by switching the color of every $w\in V(G^3_{k-1}(p,q))$ such that $f(w)=j$ to $f(w_1)$, we obtain a $k$-coloring of $G^3_{k-1}(p,q)$: the color of $w_1$ certainly does not appear in $N(w)\cap V(G^3_{k-1}(p,q))$, and we only change the colors of vertices contained in a stable set. But since $j\neq 1$, this coloring is a $k$-coloring of $G^3_{k-1}(p,q)$ in which $p$ and $q$ receive the same color, contradicting the induction hypothesis. 

Finally, consider $\alpha,\beta\in\{1,\ldots,k+1\}$, $\alpha\neq\beta$; we prove Property (ii) also by induction on $k$. Observe that, because $p$ and $q$ are symmetric in $G^3_k(p,q)$, we only need to prove the existence of a $(p,\alpha)$-CGC in which $q$ is colored with $\beta$. If $k=1$, then $\{\alpha,\beta\}=\{1,2\}$ and the property trivially holds. Now, suppose it holds for $k-1$ and consider the following cases:

\begin{itemize}
\item $\alpha,\beta<k+1$: let $f$ be a $(p,\alpha)$-CGC of $G^3_{k-1}(p,q)$ in which $f(q)=\beta$. Then, let $f'$ be obtained from $f$ by coloring $w_1$ with $f(w)$, for every $w\in V(G^3_{k-1}(p,q))$, $t_1$ with $k+1$, $t_2$ with 1 and $w_2$ with $\min\{c\mid c\notin f'(N(w_2))\}$, for every $w\in V(G^3_{k-1}(p,q))$ (observe that this is at most $k+1$);

\item $\beta=k+1$: if $\alpha=1$, let $j=2$ and $f$ be a $(p,1)$-CGC in which $q$ is colored with $k$; and if $\alpha>1$,  let $j=1$ and $f$ be a $(p,\alpha-1)$-CGC in which $q$ is colored with $k$. Also, let $w$ be any neighbor of $p$ in $G^3_{k-1}(p,q)$. Give color $\alpha$ to $p$, color $j$ to $w_1$, color $3-j$ to $t_1$, and color $j$ to $V_1$. Let $(p=v_0,v_1,\ldots,v_n)$ be the order that produces $f$, and denote by $f'$ the partial coloring of $G^3_k(p,q)$. Now, for each $h\in\{1,\ldots,n\}$, if $j=2$  and $f(v_h)=1$, then let $f'(v_h)=1$; otherwise, let $f'(v_h)=f(v_h)+1$. Because $f'(v_m)\ge f(v_m)$ for every $m$, we know that if $f(v_h)=1$ and $j=2$, then color 1 does not appear in $N(v_h)$ and color 1 is allowed for $v_h$. Otherwise, because for each $c\in \{2,\ldots,f(v_h)-1\}$ the neighbor of $v_h$ in $f^{-1}(c)$ is now colored with $c+1$ (including $p$ if $\alpha>1$), we just need to prove that $v_h$ also has neighbors of colors 1 and 2 in $f'$ in order to prove that $v_h$ is greedily colored. Let $v_m\in N(v_h)\cap f^{-1}(1)$. If $j=2$, then $f'(v_m)=1$ and $v_h$ has a neighbor in $V_1$ of color~2; otherwise, $f'(v_m)=2$ and $v_h$ has a neighbor of color~1 in $V_1$. To finish coloring $G^3_k(p,q)$, give color $\min\{c\mid c\notin f'(N(h_2))\}$, for every $h\in V(G^3_{k-1}(p,q))$ (observe that this is at most $2$), and color 3 to $x_2$. Because $k\ge3$, we know that $f'(q) = f(q)+1 = k+1$ as desired.

\item $\alpha = k+1$: a similar argument can be applied by getting a $(p,k)$-CGC of $G^3_{k-1}(p,q)$ in which $q$ is colored with $\beta-1$, when $\beta>1$, or in which $q$ is colored with~1, otherwise. 
\end{itemize}
\end{proof}

\section{Line graphs}

\ifdefined\comment\comment{O artigo citado pelo Esdras nao considera $k$ fixo e o artigo original faz apenas para $k=3$, o grafo sendo 3-regular.}\else\fi

Here we prove Lemma~\ref{lem:line} by making a reduction from the problem of deciding whether $G$ is 3-edge-colorable when $G$ is a triangle-free cubic graph, which is known to be $\NP$-complete~\cite{Hol81}. The idea follows the one applied for $C_\ell$-free graphs: for each $k\ge 7$, given an instance $G$ of the problem above, we append on each vertex some copies of a gadget that ensures that the obtained graph $G^*$ is such that $\chi'_c(G^*) = \chi'(G^*) = k$ if and only if $\chi'(G)\le 3$. For this, we first construct what we call \emph{edge gadgets}.

For each $k\ge 7$, let $G'_k(p,q)$ be obtained from a complete bipartite graph $K_{k-1,k-1}$ with parts  $P$ and $Q$ by adding $p,q,p',q'$, making $p'$ complete to $P$, $q'$ complete to $Q$ and adding edges $pp'$ and $qq'$. Graph $G'_3(p,q)$ is depicted in Figure~\ref{fig:Gp3pq}. 

\begin{figure}[thb]
\begin{center}
  \begin{tikzpicture}[scale=0.9]
  \pgfsetlinewidth{1pt}
  \tikzset{vertex/.style={circle, minimum size=0.7cm, fill=black!20, draw, inner sep=1pt}}
  \tikzset{simple/.style={circle, minimum size=0.5cm, fill=black, draw, inner sep=1pt}}

  \node [vertex] (p) at (-4,0){$p$}; 
  \node [vertex] (pp) at (-2.5,0){$p'$};
  \node [vertex] (q) at (4,0){$q$}; 
  \node [vertex] (qp) at (2.5,0){$q'$};
  
  \node [simple] (p1) at (-1,1){}; 
  \node [simple] (p2) at (-1,-1){};
  \node [label=270:$P$,draw=black!50, rounded corners, dashed, minimum height=3cm, minimum width=1.2cm] at (-1,0) {};
  \node [simple] (q1) at (1,1){}; 
  \node [simple] (q2) at (1,-1){};
  \node [label=270:$Q$,draw=black!50, rounded corners, dashed, minimum height=3cm, minimum width=1.2cm] at (1,0) {};
  
  \draw (p)--(pp)--(p1)--(q1)--(qp)--(q)
  (pp)--(p2)--(q2)--(qp)
  (p1)--(q2)   (p2)--(q1);

  \end{tikzpicture}
\caption{Edge-gadget in the reduction for line graphs; $G'_3(p,q)$ is depicted.}
\label{fig:Gp3pq}
\end{center}
\end{figure}
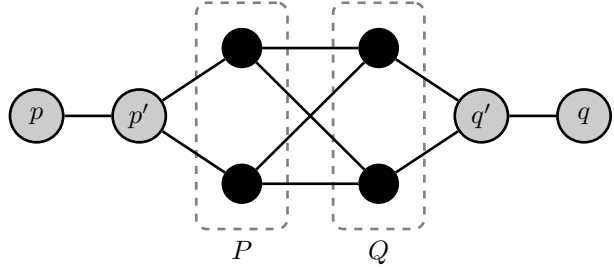

Now, we construct the gadget that will be appended on the vertices of $G$. Observe Figure~\ref{fig:linegadget} to follow the construction. Consider $k$ copies of the edge gadget and denote them by $G'_k(p,q)$ and $G'_k(p_i,q_i)$, $i\in\{1,\ldots,k-1\}$. Let $G'_k$ be obtained by identifying vertices $\{p_1,\ldots,p_{k-1}\}$ into vertex $w$, vertices $\{q,q_1,q_2,q_3\}$ into vertex $u$, vertices $\{p, q_4,\ldots,q_{k-1}\}$ into vertex $v$, and finally adding a new vertex $t$ adjacent to $w$. 

\begin{figure}[thb]
\begin{center}
  \begin{tikzpicture}
  \pgfsetlinewidth{1pt}
  \tikzset{vertex/.style={circle, minimum size=0.7cm, fill=black!20, draw, inner sep=1pt}}
  \tikzset{simple/.style={circle, minimum size=0.2cm, fill=black, draw, inner sep=1pt}}
  \tikzset{ponto/.style={circle}}
  \tikzset{XY/.style={fill=white, draw=black!50, ellipse, dashed, minimum height=0.5cm, minimum width=1cm}}
\tikzset{YX/.style={fill=white, draw=black!50, ellipse, dashed, minimum height=1cm, minimum width=0.5cm}}

  \draw (-5.4,0)--(-5.4,-0.5)  (-5.7,0)--(-5.7,-0.5) (-5.1,0)--(-5.1,-0.5) (-5.4,0.7)--(-5.4,0) (-5.4,0.7)--(-5.7,0) (-5.4,0.7)--(-5.1,0)  (-5.4,-1.4)--(-5.4,-0.5)  (-5.4,-1.4)--(-5.7,-0.5)  (-5.4,-1.4)--(-5.1,-0.5);
  \node [XY] at (-5.4,0) {};
  \node [XY] at (-5.4,-0.7) {};
  \node [simple,label=180:$p'_1$] at (-5.4,0.7) {};
  \node [simple,label=180:$q'_1$] at (-5.4,-1.4) {};
  \draw (-4.2,0)--(-4.2,-0.5)  (-4.5,0)--(-4.5,-0.5) (-3.9,0)--(-3.9,-0.5) (-4.2,0.7)--(-4.2,0) (-4.2,0.7)--(-4.5,0) (-4.2,0.7)--(-3.9,0)  (-4.2,-1.4)--(-4.2,-0.5)  (-4.2,-1.4)--(-4.5,-0.5)  (-4.2,-1.4)--(-3.9,-0.5);
  \node [XY] at (-4.2,0) {};
  \node [XY] at (-4.2,-0.7) {};
  \node [simple,label=180:$p'_2$] at (-4.2,0.7) {};
  \node [simple,label=180:$q'_2$] at (-4.2,-1.4) {};
  \draw (-3,0)--(-3,-0.5)  (-3.3,0)--(-3.3,-0.5) (-2.7,0)--(-2.7,-0.5) (-3,0.7)--(-3,0) (-3,0.7)--(-3.3,0) (-3,0.7)--(-2.7,0)  (-3,-1.4)--(-3,-0.5)  (-3,-1.4)--(-3.3,-0.5)  (-3,-1.4)--(-2.7,-0.5);
  \node [XY] at (-3,0) {};
  \node [XY] at (-3,-0.7) {};
  \node [simple,label=180:$p'_3$] at (-3,0.7) {};
  \node [simple,label=180:$q'_3$] at (-3,-1.4) {};
  
  \draw (-1.5,0)--(-1.5,-0.5)  (-1.8,0)--(-1.8,-0.5) (-1.2,0)--(-1.2,-0.5) (-1.5,0.7)--(-1.5,0) (-1.5,0.7)--(-1.8,0) (-1.5,0.7)--(-1.2,0)  (-1.5,-1.4)--(-1.5,-0.5)  (-1.5,-1.4)--(-1.8,-0.5) (-1.5,-1.4)--(-1.2,-0.5);
  \node [XY] at (-1.5,0) {};
  \node [XY] at (-1.5,-0.7) {};
  \node [simple,label=180:$p'_4$] at (-1.5,0.7) {};
  \node [simple,label=180:$q'_4$] at (-1.5,-1.4) {};
  \node at (-0.75,-0.35) {$\ldots$};
  \draw (-0.3,0)--(-0.3,-0.5)  (0,0)--(0,-0.5) (0.3,0)--(0.3,-0.5) (0,0.7)--(-0.3,0)  (0,0.7)--(0,0)  (0,0.7)--(0.3,0) (0,-1.4)--(-0.3,-0.5) (0,-1.4)--(0,-0.5)  (0,-1.4)--(0.3,-0.5);
  \node [XY] at (0,0) {};
  \node [XY] at (0,-0.7) {};
  \node [simple,label=180:$p'_{k-1}$] at (0,0.7) {};
  \node [simple,label=180:$q'_{k-1}$] at (0,-1.4) {};
  
  \node [simple,label=180:$w$] at (-2.85,1.7) {};
  \node [simple,label=180:$t$] at (-2.85,2.3) {};
  \draw (-2.85,1.7)--(-5.4,0.7) (-2.85,1.7)--(-4.2,0.7) (-2.85,1.7)--(-3,0.7)  (-2.85,1.7)--(-1.5,0.7)  (-2.85,1.7)--(0,0.7) (-2.85,1.7)--(-2.85,2.3);
  
  \node [simple,label=180:$u$] at (-4.2,-2.4) {};
  \node [simple,label=180:$v$] at (-0.75,-2.4) {};
  \draw (-4.2,-2.4)--(-5.4,-1.4)  (-4.2,-2.4)--(-4.2,-1.4) (-4.2,-2.4)--(-3,-1.4)  (-0.75,-2.4)--(-1.5,-1.4)  (-0.75,-2.4)--(0,-1.4)  (-4.2,-2.4)--(-4.2,-3.4)  (-0.75,-2.4)--(-0.75,-3.4);

  \draw (-2, -3.4)--(-2.95, -3.4)  (-2, -3.1)--(-2.95, -3.1)  (-2, -3.7)--(-2.95, -3.7)  (-4.2,-3.4)--(-2.95,-3.4)  (-4.2,-3.4)--(-2.95,-3.1) (-4.2,-3.4)--(-2.95,-3.7)  (-0.75,-3.4)--(-2,-3.4)  (-0.75,-3.4)--(-2,-3.1) (-0.75,-3.4)--(-2,-3.7);
  \node [simple,label=180:$q'$] at (-4.2,-3.4) {};
  \node [simple,label=0:$p'$] at (-0.75,-3.4) {};
  \node [YX] at (-2, -3.4) {};
  \node [YX] at (-2.95, -3.4) {};

  \end{tikzpicture}
\caption{Gadget $G'_k$ in the reduction for line graphs.}
\label{fig:linegadget}
\end{center}
\end{figure}
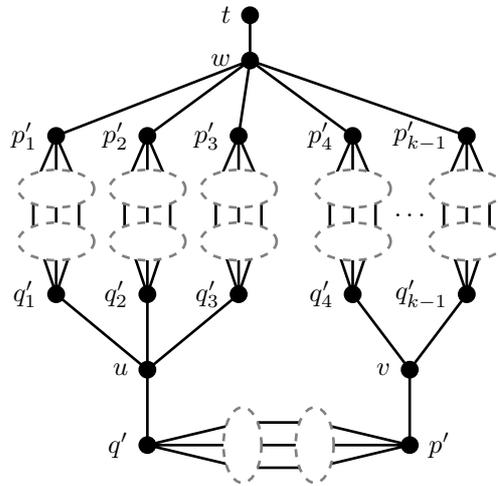


\begin{lemma}
Let $k\ge 7$, and $G'_k(p,q)$ be an edge-gadget obtained as above. Then the following hold:
\begin{enumerate}
  \item In every $k$-edge coloring of $G'_k(p,q)$, edges $pp'$ and $qq'$ receive the same color; and
  \item For every $\alpha\in\{1,\ldots,k\}$, there exists a $(pp',\alpha)$-ECGC of $G'_k(p,q)$.
\end{enumerate}
\end{lemma}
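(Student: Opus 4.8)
The plan is to treat the two items separately; throughout I will use the degree structure of $G'_k(p,q)$: the vertices $p$ and $q$ have degree $1$, while $p'$, $q'$, and every vertex of $P$ and $Q$ have degree exactly $k$, so in any $k$-edge-coloring all $k$ colours occur at each of $p'$, $q'$, and at each vertex of $P\cup Q$. For Item~1, I would fix a $k$-edge-coloring $c$ and put $\alpha=c(pp')$. Since all $k$ colours appear at $p'$, the edges from $p'$ to $P$ receive precisely the colours in $\{1,\dots,k\}\setminus\{\alpha\}$; since all $k$ colours appear at each $p_i\in P$, the edges from $p_i$ to $Q$ receive precisely $\{1,\dots,k\}\setminus\{c(p'p_i)\}$. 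As $\alpha$ colours no edge $p'p_i$, colour $\alpha$ appears on exactly one edge from each $p_i$ to $Q$, and these $k-1$ edges form a matching of $K_{k-1,k-1}$ saturating $P$; since $|P|=|Q|=k-1$ this is a \emph{perfect} matching, so colour $\alpha$ also appears at every $q_j\in Q$. Hence $c(q'q_j)\neq\alpha$ for every $j$, and since all $k$ colours appear at $q'$ we must have $c(qq')=\alpha=c(pp')$, which is Item~1.

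For Item~2 I plan to produce, for a fixed $\alpha$, a target proper $k$-edge-coloring $\bar c$ of $G'_k(p,q)$ with $\bar c(pp')=\alpha$, together with a connected ordering of $E(G'_k(p,q))$ starting at $pp'$ on which the greedy algorithm reproduces $\bar c$; since $\Delta(G'_k(p,q))=k$, this is a $(pp',\alpha)$-ECGC using exactly $k$ colours. First I would colour the star at $p'$: give $pp'$ colour $\alpha$, and then colour $p'p_1,\dots,p'p_{k-1}$ in this order, which forces greedy to assign them the colours of $\{1,\dots,k\}\setminus\{\alpha\}$ in increasing order; write $\sigma$ for the resulting increasing bijection $\{1,\dots,k-1\}\to\{1,\dots,k\}\setminus\{\alpha\}$, so $\bar c(p'p_i):=\sigma(i)$ (this also fixes the labelling of $P$). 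Next I need a proper $k$-edge-coloring of $K_{k-1,k-1}$ in which $p_i$ avoids exactly colour $\sigma(i)$: one is obtained by taking any proper $k$-edge-coloring of $K_{k,k}$ (which is $k$-regular bipartite, hence uses only $k$ colours), deleting one vertex from each side to recover $K_{k-1,k-1}$, and permuting colours so that the colour lost at $p_i$ becomes $\sigma(i)$; a short computation then shows colour $\alpha$ survives as a perfect matching of $K_{k-1,k-1}$ and each $q_j$ avoids exactly one colour $\tau(j)$, where $\tau$ is a bijection onto $\{1,\dots,k\}\setminus\{\alpha\}$. Finally I set $\bar c(q'q_j):=\tau(j)$ and $\bar c(qq'):=\alpha$, which completes a proper $k$-edge-coloring (note that $\bar c(qq')=\bar c(pp')$, consistent with Item~1).

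It then remains to order the uncoloured edges so that greedy follows $\bar c$: after $pp',p'p_1,\dots,p'p_{k-1}$, list the edges of $K_{k-1,k-1}$ in nondecreasing order of their $\bar c$-value (ties broken arbitrarily), then all edges $q'q_j$, and finally $qq'$. This is a connected order because each edge of $K_{k-1,k-1}$ at $p_i$ is adjacent to the already-coloured edge $p'p_i$, each $q'q_j$ is adjacent to some already-coloured $p_iq_j$, and $qq'$ is adjacent to some $q'q_j$. The point is that when greedy reaches an edge $p_iq_j$, every colour $\gamma<\bar c(p_iq_j)$ is already blocked: either $\gamma=\sigma(i)$, blocked by $p'p_i$, or $\gamma\neq\sigma(i)$, in which case the unique edge of colour $\gamma$ incident to $p_i$ lies in $K_{k-1,k-1}$, has smaller $\bar c$-value, and is therefore already coloured; and $\bar c(p_iq_j)$ itself is free because $\bar c$ is proper. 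Hence greedy assigns $\bar c(p_iq_j)$, and an analogous, easier, check handles the edges $q'q_j$ (whose forbidden set is exactly $\{1,\dots,k\}\setminus\{\tau(j)\}$) and $qq'$ (forbidden set $\{1,\dots,k\}\setminus\{\alpha\}$). I expect this last part to be the main obstacle: connected greedy is \emph{not} invariant under permuting colours --- greedy always picks the least available colour --- so one cannot simply recolour a convenient static colouring, and the delicate point is to make the precoloring of the star at $p'$, the choice of $\sigma$ and $\tau$, and the edge ordering fit together so that at every step the colour prescribed by $\bar c$ is precisely the least available one.
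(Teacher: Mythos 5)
Your proof is correct and follows essentially the same route as the paper: Item~1 is exactly the paper's observation that the colour class of $pp'$ must be a perfect matching of the $K_{k-1,k-1}$ (forcing $qq'$ into the same class), phrased via degrees. For Item~2 the paper only asserts that the property ``is easy to be verified,'' so your explicit construction --- greedily colouring the star at $p'$, transplanting a $k$-edge-colouring of $K_{k,k}$ so that $p_i$ misses exactly $\sigma(i)$, and ordering the bipartite edges by nondecreasing target colour --- is a valid and welcome filling-in of that omitted verification.
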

Furthermore, let $G'_k$ be constructed as before. Then, $G'_k$ is triangle-free $k$-edge-chromatic graph and the following holds.
\begin{enumerate}
  \item[3.] $G'_k$ has a $(wt,\alpha)$-CGC with $k$ colors if and only if $\alpha\le k-3$.
\end{enumerate}
\begin{proof}
Because $G'_k(p,q)$ is a bipartite graph, it has no triangles and, by K\"onig's line coloring Theorem, we know that $\chi'(G'_k(p,q)) = \Delta(G'_k(p,q)) = k$. In fact, observe that the only perfect matching containing $pp'$ must also contain $qq'$, which gives us Property (1) below. Property (2) is also easy to be verified.

Now, by Property (1), the colors in $\{wp'_i\mid i\in\{1,\ldots,k-1\}\}$ are the same as the colors in $\{uq'_1,uq'_2,uq'_3\}\cup\{vq'_i\mid i\in\{4,\ldots,k-1\}\}$. This means that, in every $k$-edge coloring of $G'_k$, edge $wt$ must get the same color as edges $uq'$ and $vp'$. Because edge $uq'$ has only 3 adjacent edges that are incident in $u$, and edge $vp'$ has only $k-4$ edges that are incident in $v$, we get that if we start to color $G'_k$ in $wt$ with color $\alpha$, we end up using $k$ colors only if $\alpha\le \max\{4,k-3\} = k-3$ (recall that $k\ge 7$). Using Property (2), one can verify that a $(wt,\alpha)$-ECGC exists when $\alpha \le k-3$. 
\end{proof}

We are finally ready to finish our proof. So, let $G$ be a triangle-free cubic graph, and let $G^*$ be obtained from $G$ by appending $k-3$ copies of $G'_k$ on each vertex of $G$. If $\chi'(G)=3$, then a ECGC of $G^*$ with $k$ colors can be constructed as follows. Let $(v_1,\ldots,v_n)$ be a connected order of $V(G)$ and let $f$ be a 3-edge-coloring of $G$ that uses colors $\{k-2,k-1,k\}$. For each $i\in\{1,\ldots,n\}$, color the copies of $G'_k$ incident to $v_i$ using colors 1 through $k-3$ on the appended edges (this is possible by Property (3)), then color the uncolored edges incident to $v_i$ in ascending order of their colors. 
Now, suppose that $\chi'_c(G^*) = k$. By Property (3) and the fact that $d_{G^*}(v) = k$ for every $v\in V(G)$, we get that the colors 1 through $k-3$ are all used in the copies of $G'_k$ appended on $v$. This means that the edges of $G$ can only use colors $\{k-2,k-1,k\}$, which gives us a 3-edge-coloring of $G$.

\section{$P_9$-free graphs}\label{sec:paths}

In this section, we prove Lemma~\ref{lem:paths}. In~\cite{huang2016improved}, it is proved that deciding $\chi(G)\le 4$ is $\NP$-complete when $G$ is a $P_9$-free graph. Observe that if $G$ is a $P_9$-free graph and $G'$ is obtained from $G$ by adding a universal vertex, then $G'$ is also $P_9$-free and is such that $\chi(G)\le 4$ if and only if $\chi(G')\le 5$. Therefore, deciding whether $\chi(G)\le 5$ is $\NP$-complete even if $G$ is a $P_9$-free graph with a universal vertex. We reduce this problem to the problem of deciding whether $\chi_c(G)=\chi(G)$ for $P_9$-free graphs. For shortness, we call the former problem \texttt{5-Col $P_9$-free} and the latter \texttt{CGC $P_9$-free}. 

The following lemma proved in~\cite{BCD+14} will be useful.

\begin{lemma}\label{lem:toolLemma}
Let $G$ be a connected graph, $v\in V(G)$ and $\alpha$ be any positive integer. Then, there exists a $(v,\alpha)$-CGC of $G$ with at most $\max\{\alpha,\chi(G)+1\}$ colors.
\end{lemma}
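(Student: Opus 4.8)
Write $k=\chi(G)$, fix an optimal colouring $\psi\colon V(G)\to\{1,\dots,k\}$, and set $M=\max\{\alpha,k+1\}$. If $\alpha\le k$ I would first permute the colour classes of $\psi$ so that $\psi(v)=\alpha$ (this is harmless and makes $v$ "agree" with the colouring we are about to reproduce). The goal is to build a connected order starting at $v$ along which the greedy algorithm essentially reproduces $\psi$, with $v$ being the only vertex pushed off its $\psi$-colour. Concretely: start from the connected set $S=\{v\}$ and repeatedly append to the order a vertex of $V(G)\setminus S$ that has a neighbour in $S$ and, among all such candidates, has the \emph{smallest} $\psi$-colour (breaking ties arbitrarily), adding it to $S$; this yields a connected order $\pi=(v=v_1,v_2,\dots,v_n)$. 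Colour $v_1$ with $\alpha$ and run the greedy algorithm along $\pi$ to obtain a colouring $g$; by construction $g$ is a $(v,\alpha)$-CGC of $G$, so it only remains to bound the colours it uses.

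\textbf{Bounding the colours.} The key structural fact is that for every $i\ge 2$ the already-ordered neighbourhood $N(v_i)\cap\{v_1,\dots,v_{i-1}\}$ contains \emph{at most one} vertex of $\psi$-colour larger than $\psi(v_i)$: once $v_i$ has a single neighbour in the current prefix it becomes a candidate of $\psi$-colour $\psi(v_i)$, so the selection rule never again picks a prefix-neighbour of $v_i$ with a larger $\psi$-colour. Using this, I would prove by induction along $\pi$ that $g(v_i)\le\psi(v_i)+1$ for every $i\ge2$ (hence $g(v_i)\le k+1\le M$), while $g(v_1)=\alpha\le M$; together these bound the number of colours of $g$ by $M$. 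In the inductive step the coloured neighbours of $v_i$ split into the at most one "high" neighbour, possibly the root $v_1$, and the "low" neighbours (of $\psi$-colour $<\psi(v_i)$), which by the induction hypothesis carry colours in $\{1,\dots,\psi(v_i)\}$; a short case analysis on the position of the (at most two) extra colours contributed by $v_1$ and the high neighbour — and on whether $\psi(v_i)$ coincides with $\alpha$ or with the high neighbour's colour, using $\psi(v_i)\le k$ throughout — shows the least colour missing on that neighbourhood is at most $\max\{\psi(v_i)+1,\alpha\}\le M$. As a sanity check, for $\alpha=1$ this is precisely the statement behind Equation~(\ref{Equation1}): from any prescribed start, a connected graph has a connected greedy colouring with at most $\chi(G)+1$ colours.

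\textbf{Main obstacle.} The delicate point is the interaction, inside the induction, between the forced colour $\alpha$ at $v$ and the single exceptional "high" neighbour: a priori these two "extra" colours could combine to exhibit a full block of colours $1,\dots,k+1$ in some neighbourhood and force a vertex to colour $k+2$. Ruling this out is exactly where the slack $\max\{\alpha,\chi(G)+1\}$ (rather than $\chi(G)+1$) is used, and it may require a small refinement of the selection rule or tie-breaking near $v$ — for instance, first exhausting all vertices of $\psi$-colour at most $\alpha$ in the ball around $v$ — so that the colour $\alpha$ introduced at $v$ always coincides with a colour that is present in the relevant neighbourhood anyway. I expect the bulk of the write-up to be this bookkeeping around the starting vertex; the rest of the argument is routine.
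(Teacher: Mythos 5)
First, a remark on scope: the paper does not prove Lemma~\ref{lem:toolLemma} at all --- it is quoted from~\cite{BCD+14} --- so there is no in-paper proof to match yours against; your attempt has to stand on its own. It does not: the central inductive invariant $g(v_i)\le\psi(v_i)+1$ is false for the order you construct, and in fact your order (with a legal tie-break) can use more than $\max\{\alpha,\chi(G)+1\}$ colours. Your structural fact is correct --- each $v_i$ has at most one earlier neighbour of larger $\psi$-colour, namely its first prefix-neighbour --- but the induction does not close. The ``low'' prefix-neighbours of $v_i$ have $\psi$-colours in $\{1,\ldots,\psi(v_i)-1\}$, yet the induction hypothesis only bounds their greedy colours by $\psi(v_i)$, so two low neighbours from the \emph{same} $\psi$-class can already occupy two distinct greedy colours and the low neighbours can jointly cover all of $\{1,\ldots,\psi(v_i)\}$; the single high neighbour then supplies $\psi(v_i)+1$ and forces $g(v_i)=\psi(v_i)+2$. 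Once one vertex overshoots, the hypothesis is unavailable for later vertices and the error cascades. Note also that this has nothing to do with the forced colour $\alpha$ at the root, which is the only obstacle you flag; it happens with $\alpha=1=\psi(v)$, so your proposed repair (re-ordering near $v$) does not touch it.

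Concretely, take $V=\{v,a,u,z_1,p,z',w,r,s\}$ with $\psi$-colours $1,2,3,1,2,1,2,2,3$ respectively, and edges $va$, $vu$, $au$, $uz_1$, $up$, $uw$, $pz'$, $z_1w$, $z'w$, $z_1r$, $z'r$, $z_1s$, $z's$, $rs$, $ws$. This $\psi$ is proper and $\chi(G)=3$ (there is a triangle $vau$). With $\alpha=1$ your rule can legally produce the connected order $(v,a,u,z_1,p,z',w,r,s)$: at every step the chosen vertex has minimum $\psi$-colour among candidates (the only choices are ties among $\psi$-colour-$2$ candidates, broken as allowed). The greedy colours along this order are $1,2,3,1,1,2,4,3,5$: the vertex $w$ (with $\psi(w)=2$) sees colours $\{1,2,3\}$ on $u,z_1,z'$ and gets colour $4=\psi(w)+2$, refuting the invariant, and then $s$ sees $\{1,2,3,4\}$ on $z_1,z',r,w$ and gets colour $5>\max\{\alpha,\chi(G)+1\}=4$. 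So the construction itself, not just its analysis, fails; a genuinely different order (or a much more delicate selection rule with a correct invariant) is needed, which is why the paper defers to the argument in~\cite{BCD+14} rather than reproving the lemma.
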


In our  reduction, we use an auxiliary graph $H$ with the following properties.
\begin{enumerate}
\item $H$ is connected, $\chi(H)=5$ and for every $\alpha\in\{1,\cdots,5\}$ and every $u\in V(H)$, there is no $(u,\alpha)$-CGC of $H$ with~5 colors;
\item There exists $v\in V(H)$ such that there are no induced $P_8$ with extremity in $v$;
\item $H$ is a $P_9$-free graph.
\end{enumerate}

We first assume that such a graph exists, and later we show how to construct it. Let $G$ be an instance of \texttt{5-Col $P_9$-free} and let $u\in V(G)$ be its universal vertex. Also, let $G'$ be obtained from $G+H$ by identifying vertices $u$ and $v$; denote the resulting vertex by $w$, the vertices corresponding to $G$ by $V_G$ and the vertices corresponding to $H$ by $V_H$. Because of Properties (2) and (3), the fact that $G$ is $P_9$-free and that $w$ dominates $V_G$ in $G'$, we know that $G'$ is also a $P_9$-free graph. We claim that $\chi_c(G')=\chi(G')$ if and only if $\chi(G)>5$. This finishes the proof.

First, let $\chi(G)=k>5$, and observe that, because $\chi(H)=5$, we get that $\chi(G')=k>5$. Let $f$ be an optimal coloring of $G$. Since $u$ is universal in $G$, we know that $\chi'_c(G)=\chi(G)$; it suffices to start by coloring $u$ with~1 and using any optimal greedy ordering for $G-u$ (recall that it always exists). Also, because $\chi(H)+1\le k$, by Lemma~\ref{lem:toolLemma} we know that there exists a $(v,f(v))$-CGC of $H$ with at most $k$ colors. This means that $f$ can be extended to a connected greedy coloring of $G'$ with $\chi(G')$ colors.

Now, suppose that $\chi(G)\le 5$ and let $f$ be an optimal connected greedy coloring of $G'$. Note that, since $\chi(H)=5$, we get that $\chi(G') = 5$. Also, note that either $f$ starts in $V_H$, in which case $f$ restricted to $H$ is a CGC of $H$, or $f$ starts in $V_G\setminus\{w\}$, in which case $f$ restricted to $H$ is a $(v,f(v))$-CGC of $H$. In both situations, we get that the number of used colors is bigger than~5 by Property (1). 

It remains to construct the desired auxiliary graph. 
Observe Figure~\ref{fig:P9free} to follow the construction. We start with a complete graph on vertices $K=\{v_1,\ldots,v_5\}$. Then, for each $i\in \{1,\ldots,5\}$, add vertices $x_i,y_i$ and make each adjacent to two vertices of $K\setminus\{v_i\}$ in a way that they do not have common neighbors. Finally, again for each $i\in \{1,\ldots,5\}$, add a clique $K^i$ of size~4 and make both $x_i$ and $y_i$ complete to $K^i$. In the following lemma we prove that $H$ satisfies the desired conditions.

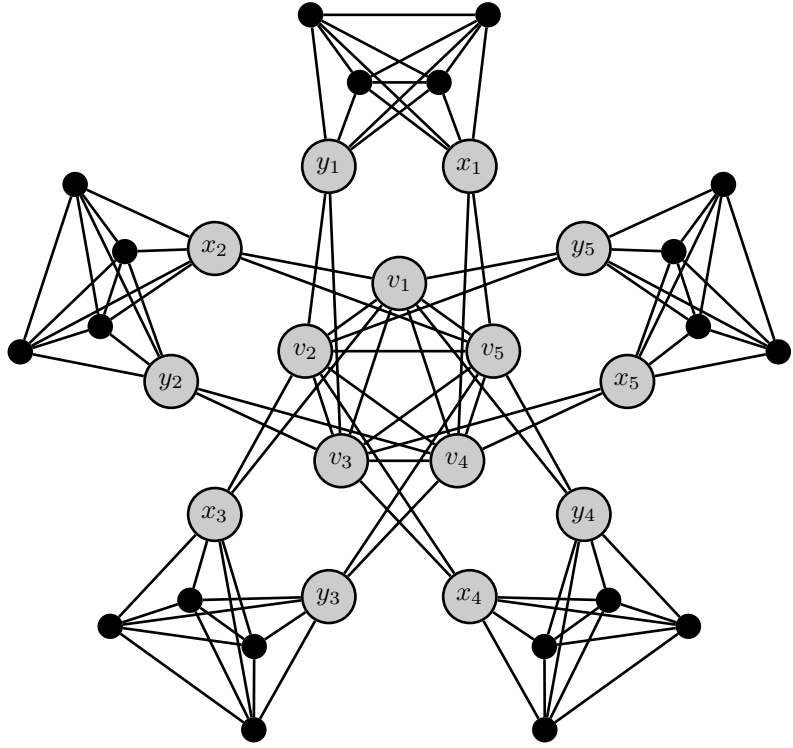
\begin{figure}[thb]
\begin{center}
  \begin{tikzpicture}
  \pgfsetlinewidth{1pt}
  \tikzset{vertex/.style={circle, minimum size=0.7cm, fill=black!20, draw, inner sep=0.5pt}}
  \tikzset{peq/.style={circle, minimum size=0.3cm, fill=black, draw, inner sep=1pt}}
  
 \foreach \i in {1,...,5}{
    \node [vertex] (x\i) at (\i*72:3cm) {$x_\i$};
    \node [vertex] (y\i) at (\i*72+36:3cm) {$y_\i$};
    \node [vertex] (v\i) at (\i*72+18:1.3cm) {$v_\i$};}

\draw (v1)--(v2)--(v3)--(v4)--(v5)--(v2)--(v4)--(v1)--(v3)--(v5)--(v1)
(x1)--(v4)  (x1)--(v5)  (y1)--(v2)  (y1)--(v3)
(x2)--(v1)  (x2)--(v5)  (y2)--(v3)  (y2)--(v4)
(x3)--(v1)  (x3)--(v2)  (y3)--(v4)  (y3)--(v5)
(x4)--(v2)  (x4)--(v3)  (y4)--(v1)  (y4)--(v5)
(x5)--(v3)  (x5)--(v4)  (y5)--(v1)  (y5)--(v2);

  \foreach \j in {1,...,5}{
        \node [peq] (o\j1) at (72*\j+4.5:5cm) {};
        \node [peq] (o\j2) at (72*\j+10.5:4cm) {};
        \node [peq] (o\j3) at (72*\j+25.5:4cm) {};
        \node [peq] (o\j4) at (72*\j+31.5:5cm) {};
        \foreach \i in {1,...,4} { \draw (x\j)--(o\j\i)--(y\j);}
        \draw (o\j1)--(o\j2)--(o\j3)--(o\j4)--(o\j1)   (o\j2)--(o\j4)  (o\j1)--(o\j3);
  }

  \end{tikzpicture}
\caption{$P_9$-free gadget.}
\label{fig:P9free}
\end{center}
\end{figure}

\begin{lemma}
Let $H$ be the graph obtained as explained above. Then, Properties (1)-(3) hold for $H$.
\end{lemma}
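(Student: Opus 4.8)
The plan is to verify the three required properties in turn, exploiting heavily the symmetric structure of $H$: it consists of a $K_5$ on $K=\{v_1,\dots,v_5\}$, five ``ear'' pairs $x_i,y_i$ each attached to two vertices of $K\setminus\{v_i\}$ (with $x_i$ and $y_i$ having no common neighbour in $K$), and five private $4$-cliques $K^i$ each joined completely to both $x_i$ and $y_i$. First I would record the key local fact: $x_i\cup K^i$ and $y_i\cup K^i$ are each $5$-cliques, so in any $5$-colouring the colour of $x_i$ equals that of $y_i$, and this common colour is the unique colour of $K$ not used on the two neighbours $N(x_i)\cap K$ together with the two neighbours $N(y_i)\cap K$; since those four neighbours are exactly $K\setminus\{v_i\}$ coloured with four distinct colours, we get $\psi(x_i)=\psi(y_i)=\psi(v_i)$ in \emph{every} $5$-colouring. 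Combined with $\omega(H)=5$ (any $K^i$ together with $x_i$), this gives $\chi(H)=5$, which is the first half of Property~(1).

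For the harder half of Property~(1) --- that no $(u,\alpha)$-CGC of $H$ uses only $5$ colours for any starting vertex $u$ and colour $\alpha$ --- the idea is that any connected greedy $5$-colouring would have to be a proper $5$-colouring, hence satisfy $\psi(x_i)=\psi(y_i)=\psi(v_i)$ for all $i$; I would then argue that the greedy rule cannot produce such a colouring. The key obstruction: consider the first vertex among $\{x_i,y_i\}$ (over all $i$) to be coloured, say it is $x_i$ coloured at some step $t$. At that moment its only coloured neighbours lie in $K$, and it has at most two such (the vertices $N(x_i)\cap K$), so the greedy algorithm assigns $x_i$ the smallest colour avoiding those $\le 2$ colours, which is $1$, $2$, or $3$ --- in particular a colour in $\{1,2,3\}$, and it is forced to be $\le 3$ regardless of what has been coloured in the $K^j$'s. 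Meanwhile, to have $\psi(x_i)=\psi(v_i)$ we would need $v_i$ to carry one of at most three possible colours too; the crux is to show that the combinatorics of ``five ears, each forcing $x_i=y_i=v_i$, each of $x_i,y_i$ greedily pinned into a low range when first reached'' cannot be simultaneously satisfied. Concretely I expect the argument to run: look at the last vertex of $K$ to be coloured; by the time all of $K$ is coloured the five colours $1,\dots,5$ appear on $K$, so some $v_i$ has colour $5$; for that $i$, both $x_i$ and $y_i$ must eventually get colour $5$, but whichever of them is coloured first sees at most two coloured $K$-neighbours plus possibly some coloured vertices of $K^i$ --- and one shows colour $5$ is never the greedy choice there because a colour in $\{1,2,3\}$ is always available (the neighbourhood seen has size at most $2+4=6$ but the relevant forbidden set, the colours on $N\cap K$ plus colours on the coloured part of $K^i$, still leaves a colour $<5$ free unless $K^i$ is already fully $4$-coloured with $\{1,2,3,4\}$, a case to be handled by noting that then the other of $\{x_i,y_i\}$ was reached even earlier with an even smaller forbidden set). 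I expect this case analysis --- carefully ordering the events ``$K$ becomes fully coloured'', ``$K^i$ becomes fully coloured'', ``$x_i$ or $y_i$ first coloured'' --- to be the main obstacle and the part requiring genuine care.

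Properties~(2) and~(3) are purely structural and I would dispatch them by a bounded case analysis on induced paths. For Property~(3) ($H$ is $P_9$-free), note that every vertex is within distance $2$ of $K$: indeed $x_i,y_i$ are adjacent to $K$, and each vertex of a $4$-clique $K^j$ is adjacent to $x_j$. Since $K$ is a clique, any induced path can use at most two vertices of $K$; I would then bound how far a shortest induced path can ``travel'' away from $K$ on each side. An induced path visiting $K$ in $\le 2$ vertices splits into at most two pieces hanging off $K$, and each such piece lives in the neighbourhood of a single ear or a single $K^j$ (one must check that an induced path cannot jump between two different ears without passing through $K$: the only connections between the $\{x_i,y_i,K^i\}$-blocks for distinct $i$ go through $K$, so this is immediate), which bounds each piece's length by a small constant; summing gives length $< 9$. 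For Property~(2), I would pick $v = v_i$ for a suitable $i$ --- actually any single vertex of $H$ can serve, since the distance-$2$-to-$K$ observation already forces $H$ to have small diameter, so in fact \emph{every} vertex $v$ has no induced $P_8$ starting at it --- and verify directly that starting an induced path at $v_i$ and growing it, the path is forced to terminate within $7$ vertices by the same block-structure reasoning (at most two vertices in $K$, then at most a few more in one ear/clique block on the far side). The choice that makes the later reduction work is to take $v$ to be any vertex of $K$; I would state it as $v=v_1$ for definiteness. These last two verifications, while tedious, involve only finitely many short paths and pose no real difficulty compared with the greedy-colouring argument for Property~(1).
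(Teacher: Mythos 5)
Your structural analysis (the $5$-cliques $K^i\cup\{x_i\}$ and $K^i\cup\{y_i\}$ forcing $\psi(x_i)=\psi(y_i)=\psi(v_i)$ in every $5$-colouring) and your treatment of Properties (2) and (3) via the block decomposition match the paper's proof. The gap is in the second half of Property (1). You single out the one index $i$ with $f(v_i)=5$ and try to show that the first of $\{x_i,y_i\}$ to be coloured cannot receive colour $5$. This works when the connected order starts outside $V(H_i)=K^i\cup\{x_i,y_i\}$: since the only neighbours of $K^i$ outside itself are $x_i$ and $y_i$, the clique $K^i$ is entirely uncoloured when the first ear vertex of the block is reached, so that vertex has at most two coloured neighbours and greedy assigns it a colour at most $3$. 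But when the order starts \emph{inside} $V(H_i)$ the argument breaks: if the start vertex lies in $K^i$, all of $K^i$ can legitimately be coloured $\{1,2,3,4\}$ before either of $x_i,y_i$ is touched, and then $x_i$ greedily receives colour $5$ with no contradiction (similarly if the start vertex is $x_i$ itself with $\alpha=5$). Your proposed patch for this case --- ``then the other of $\{x_i,y_i\}$ was reached even earlier'' --- is false precisely there: neither ear vertex precedes $K^i$ when the order starts in $K^i$. The paper closes this hole by using \emph{two} colours rather than one: since $x_m,y_m,v_m$ share a colour for every $m$ and $K$ carries all five colours, there are distinct indices $i,j$ with $f(x_i)=4$ and $f(x_j)=5$; the start vertex lies in at most one of the disjoint sets $V(H_i),V(H_j)$, and for the other one the ``at most two coloured neighbours, hence colour $\le 3$'' argument applies and contradicts the required colour $4$ or $5$. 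With that adjustment your proof goes through.

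A smaller point: your claim that \emph{every} vertex of $H$ avoids being an endpoint of an induced $P_8$ does not follow from the diameter observation (small diameter does not bound induced path length, and the block count only yields $\le 8$ vertices for paths with both ends outside $K$, i.e.\ it does not exclude an induced $P_8$ with both endpoints in ear blocks). This is harmless because you ultimately choose $v\in K$, which is what the paper does and all that Property (2) requires.
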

\begin{proof}
First, we prove Property (1). Clearly, $H$ is a connected graph. To see that $H$ is 5-chromatic, observe that $\chi(H)\ge 5$ and that we can color $H$ with~5 colors by giving color $i$ to $\{v_i,x_i,y_i\}$ and colors $\{1,\ldots,5\}\setminus\{i\}$ to $K^i$, for each $i\in \{1,\ldots,5\}$. It remains to show that $H$ has no $(v,\alpha)$-CGC, for every $v\in V(G)$ and every $\alpha\in\{1,\ldots,5\}$. By contradiction, consider $f$ to be such a coloring and let $\pi$ be the corresponding connected order. For each $i\in \{1,\ldots,5\}$, denote by $H_i$ the subgraph $H[K^i\cup\{x_i,y_i\}]$. Note that in every 5-coloring of $H$, vertices $x_i$, $y_i$ and $v_i$ must have the same color, for every $i\in \{1,\ldots,5\}$. This means that there exist distinct $i,j\in \{1,\ldots,5\}$ such that $f(x_i)=4$ and $f(x_j)=5$. Also, we know that $v$ is either not in $V(H_i)$ or not in $V(H_j)$, say $v\notin V(H_i)$. Since $\pi$ is a connected order, the first vertex of $V(H_i)$ to appear in $\pi$ is either $x_i$ or $y_i$, say it is $x_i$. We get a contradiction since at this point $x_i$ has at most two colored neighbors, which means that the color of $x_i$ would be at most~3. 

Now, observe that any induced path $P$ in $H$ has at most two vertices of $K$. If $P$ starts with a vertex of $K$, say $v_1$, then it has at most~5 vertices, namely some other vertex $v_j\in K$ and vertices $\{x_1,w,y_1\}$ for some $w\in K^1$. This ensures Property (2). Now, suppose that $P$ has size bigger than~5, which means that both of its extremities are not in $K$. Because the longest induced path in $V(H_i)$ has length~3 for every $i\in \{1,\ldots,5\}$, and because $P$ contains at most~2 vertices of $K$, we get that $P$ has at most~8 vertices and Property (3) holds.
\end{proof}

Observe that, because \texttt{$k$-Col $P_\ell$-free} is $\NP$-complete for $k\ge 5$ and $\ell=6$, and for $k\ge 4$ and $\ell=7$~\cite{huang2016improved}, if we manage to obtain a similar gadget $H$ for these values, then the same arguments work. More formally, given positive integers $k$ and $\ell$, we say that a graph $H$ is a \emph{$(k,\ell)$-gadget} if:
\begin{enumerate}
\item $H$ is connected, $\chi(H)=k$ and for every $\alpha\in\{1,\cdots,k\}$ and every $u\in V(H)$, there is no $(u,\alpha)$-CGC of $H$ with~$k$ colors;
\item There exists $v\in V(H)$ such that there are no induced $P_{\ell-1}$ with extremity in $v$;
\item $H$ is a $P_\ell$-free graph.
\end{enumerate}

The results in~\cite{huang2016improved} and the above proof gives us the following meta-theorem (recall that we need to add a universal vertex in the beginning of the proof, this is why $k$ increases below when related to the results in~\cite{huang2016improved}).

\begin{theorem}
If there exists a $(k,7)$-gadget for any $k\ge 5$, then deciding $\chi_c(G)=\chi(G)$ is $\NP$-hard for $P_7$-free graphs. Also, if there exists a $(k,6)$-gadget for any $k\ge 6$, then deciding $\chi_c(G)=\chi(G)$ is $\NP$-hard for $P_6$-free graphs.
\end{theorem}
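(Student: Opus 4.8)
The plan is to reuse, essentially word for word, the reduction behind Lemma~\ref{lem:paths}, with the explicit $(5,9)$-gadget there replaced by the hypothetical $(k,\ell)$-gadget; the only genuinely new point is upgrading the \NP-completeness source. First I would record the following: for $\ell=7$, deciding $\chi(G)\le 4$ is \NP-complete on $P_7$-free graphs~\cite{huang2016improved}, so --- as ${\cal F}(P_7)$ is closed under adding universal vertices --- appending $k-4\ge 1$ universal vertices shows that, for every fixed $k\ge 5$, deciding $\chi(G)\le k$ is \NP-complete on $P_7$-free graphs that have a universal vertex; call this problem \texttt{$k$-Col $P_7$-free}. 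For $\ell=6$, the same manoeuvre (now appending $k-5\ge 1$ universal vertices to the \NP-complete instances of deciding $\chi(G)\le 5$ on $P_6$-free graphs~\cite{huang2016improved}) yields \NP-completeness of \texttt{$k$-Col $P_6$-free} for every fixed $k\ge 6$. In both cases the required threshold on $k$ matches the hypothesis of the theorem.

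Now fix $k$ for which a $(k,\ell)$-gadget $H$ is given, and let $v\in V(H)$ be the vertex supplied by Property~(2). Given an instance $G$ of \texttt{$k$-Col $P_\ell$-free} with universal vertex $u$, I would build $G'$ from the disjoint union $G+H$ by identifying $u$ with $v$; write $w$ for the resulting vertex and $V_G,V_H$ for the vertex sets inherited from $G$ and $H$, so that $V_G\cap V_H=\{w\}$. The first thing to check is that $G'$ is $P_\ell$-free. Given an induced path $P$ of $G'$: if $P\subseteq G[V_G]\cong G$ or $P\subseteq G[V_H]=H$, then $|V(P)|\le \ell-1$ since both $G$ and $H$ are $P_\ell$-free; otherwise $P$ crosses $w$, so $P=(p_1,\dots,p_j=w,\dots,p_m)$ with $p_i\in V_G\setminus\{w\}$ for $i<j$ and $p_i\in V_H\setminus\{w\}$ for $i>j$. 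The prefix $(p_1,\dots,p_j)$ is an induced path of $G$ with the universal vertex $u=w$ as an endpoint, so it has at most two vertices (a universal vertex cannot lie at distance two from a vertex of an induced path), i.e.\ $j\le 2$; and the suffix $(p_j,\dots,p_m)$ is an induced path of $H$ with endpoint $v$, so it has at most $\ell-2$ vertices by Property~(2). Hence $m\le 1+(\ell-2)=\ell-1$, and $G'$ is $P_\ell$-free. I expect this to be the only step requiring real care, and it is precisely where Property~(2) must be used in its stronger ``no induced $P_{\ell-1}$ with an endpoint at $v$'' form, because $P$ is allowed to use one extra vertex of $V_G$ besides $w$.

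It then remains to prove $\chi_c(G')=\chi(G')$ if and only if $\chi(G)>k$, which is verbatim the argument in the proof of Lemma~\ref{lem:paths}. If $\chi(G)=m>k$, then $\chi(G')=m$ because $\chi(H)=k$; since $u$ is universal in $G$ we have $\chi_c(G)=\chi(G)$, witnessed by a connected order $\pi_G$ of $G$ that starts at $w$ and assigns it some colour $\alpha$, and by Lemma~\ref{lem:toolLemma} there is a $(v,\alpha)$-CGC of $H$ using at most $\max\{\alpha,\chi(H)+1\}\le m$ colours; concatenating $\pi_G$ with the order realising that $(v,\alpha)$-CGC (dropping its first vertex $w$) produces a CGC of $G'$ with $m=\chi(G')$ colours, so $\chi_c(G')=\chi(G')$. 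Conversely, if $\chi(G)\le k$, then $\chi(G')=k$, and any connected greedy $k$-colouring $f$ of $G'$, with connected order $\pi$, restricts on $V_H$ to a $(z,f(z))$-CGC of $H$ using at most $k$ colours --- indeed $\pi$ either starts inside $V_H$, in which case $f|_{V_H}$ is a CGC of $H$, or starts in $V_G\setminus\{w\}$, in which case $w$ precedes every vertex of $V_H\setminus\{w\}$ in $\pi$ and $f|_{V_H}$ is a $(v,f(v))$-CGC of $H$ --- contradicting Property~(1). Hence $\chi_c(G')\ge k+1>\chi(G')$, which finishes the equivalence and the theorem. I would note that there is no new conceptual obstacle here: the whole difficulty is hidden in the hypothesis, i.e.\ in actually exhibiting the $(k,7)$- or $(k,6)$-gadgets.
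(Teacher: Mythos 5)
Your proposal is correct and follows exactly the route the paper intends: the paper proves this meta-theorem only by pointing back to the reduction for Lemma~\ref{lem:paths} (identify the gadget's special vertex $v$ with a universal vertex of a hard instance, then use Lemma~\ref{lem:toolLemma} for one direction and Property~(1) for the other), and you reproduce that reduction with the correct bookkeeping of the thresholds $k\ge 5$ and $k\ge 6$. Your explicit verification that $G'$ is $P_\ell$-free (at most two path vertices on the $G$-side because $w$ is universal there, at most $\ell-2$ on the $H$-side by Property~(2)) is a detail the paper leaves implicit, and it is argued correctly; the only blemish is the stray ``$(z,f(z))$'' which should read ``$(w,f(w))$''.
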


As already mentioned in the introduction, in~\cite{babel1994hard} a graph satisfying Property (1) above is called globally hard-to-color. The cited article is entirely dedicated to finding small hard-to-color graphs, which shows that finding the desired gadgets is no trivial task.



\section{Equality on $P_5$-free and $(P_4+K_1)$-free graphs}\label{sec:P5}

It is known that if $G$ is $P_4$-free, then $G$ is $(\chi,\Gamma)$-perfect~\cite{ChSe79}, which implies that $\chi(G) = \chi_c(G) = \Gamma_c(G)=\Gamma(G)$; so in our proof we can suppose that $G$ has some $P_4$. To prove Lemma~\ref{lem:poly}, we pick some dominating $P_4$ in $G$ and an optimal coloring of $G$ and change the coloring in order to prove that a CGC can be obtained. In what follows, given a coloring $f$ of $G$ and a vertex $v\in V(G)$, the \emph{$(1,2)$-component containing $v$} is the component of the subgraph induced by color classes 1 and 2 that contains $v$. 

Now, before we proceed, observe that if $f$ is an greedy coloring of $G$ with $\chi(G)$ colors, and $i,j$ are arbitrary colors, with $i<j$, then a greedy coloring $f'$ of $G$ with at most $\chi(G)$ colors exist where the color classes $i$ and $j$ are switched in $f'$. It suffices to recolor every $v\in f^{-1}(i)$ with $j$, every $v\in f^{-1}(j)$ with $i$, then, for every $\ell\in \{i+1,\ldots,\chi(G)\}$ and every $v\in f^{-1}(\ell)$, move $v$ to color class $h$, where $h$ is the minimum value such that $N(v)\cap f^{-1}(h)=\emptyset$.

The following easy proposition is crucial in the proof.

\begin{proposition}\label{prop:dominating}
Let $G$ be a graph, $X\subseteq G$ be a connected dominating set of $G$ and $f$ be a $k$-coloring of $G$. If $f$ restricted to $X$ is a CGC of $G[X]$, then $\chi_c(G)\le k$.
\end{proposition}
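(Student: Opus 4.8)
\textbf{Proof plan for Proposition~\ref{prop:dominating}.}
The plan is to take the connected ordering of $X$ that witnesses ``$f$ restricted to $X$ is a CGC of $G[X]$'', extend it to a connected ordering of all of $V(G)$, and argue that running the greedy algorithm along this extended ordering reproduces $f$ on $X$ and never uses a color larger than $k$ on $V(G)\setminus X$. First I would fix a connected ordering $\pi_X=(v_1,\dots,v_t)$ of $X$ such that coloring greedily along $\pi_X$ produces exactly $f|_X$; such an ordering exists by hypothesis. Then I would append the remaining vertices of $G$ in an arbitrary order $v_{t+1},\dots,v_n$. Since $X$ is a dominating set of $G$, every $v_j$ with $j>t$ has a neighbour in $X\subseteq\{v_1,\dots,v_{j-1}\}$, and every $v_i$ with $2\le i\le t$ has a neighbour among $v_1,\dots,v_{i-1}$ because $\pi_X$ is connected; hence $\pi=(v_1,\dots,v_n)$ is a connected ordering of $V(G)$.

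The key observation is that the greedy algorithm run along $\pi$ agrees with the greedy run along $\pi_X$ on the prefix $v_1,\dots,v_t$: when we process $v_i$ ($i\le t$), its already-coloured neighbours are exactly its already-coloured neighbours within $X$ (no vertex outside $X$ has been coloured yet), so it receives the same color as in $f|_X$, namely $f(v_i)$. Thus after step $t$, every vertex of $X$ carries its color under $f$. Now for the remaining steps, I would simply observe that $f$ itself is a proper $k$-coloring of $G$ that already agrees with the current partial coloring on $X$; so when we process $v_j$ for $j>t$, the color $f(v_j)\in\{1,\dots,k\}$ is not used on any already-coloured neighbour of $v_j$ (those neighbours either lie in $X$, where they carry their $f$-color and $f$ is proper, or come earlier in $v_{t+1},\dots,v_{j-1}$, and we maintain inductively that each such vertex also carries a color in $\{1,\dots,k\}$ that is consistent with properness relative to the already-coloured vertices — more carefully, one argues by induction that greedy assigns to $v_j$ a color at most $f(v_j)$, since $f$ restricted to $\{v_1,\dots,v_j\}$ is a proper coloring avoiding conflicts). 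Therefore the greedy color of $v_j$ is at most $\max\{f(v_j) : j>t\}\le k$, and the whole connected greedy coloring along $\pi$ uses at most $k$ colors, which gives $\chi_c(G)\le k$.

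I do not expect any genuine obstacle here; the only point requiring a little care is the inductive claim that, once the greedy run has reproduced $f$ on $X$, each subsequently processed vertex $v_j$ receives a color $\le f(v_j)$. This follows from the standard fact that the greedy algorithm, run in any order, never assigns to a vertex a color exceeding the color it has in any fixed proper coloring that has been respected on all earlier vertices — here the ``fixed proper coloring'' is $f$ and it is respected on $v_1,\dots,v_t$ exactly (and, by the induction, on $v_{t+1},\dots,v_{j-1}$ up to the bound $\le f(\cdot)$, which still suffices because greedy only cares about which colors are \emph{forbidden}, i.e. present on neighbours, and a neighbour coloured with some $c\le f(\cdot)$ can only forbid color $c$, never create a conflict that forces a color $>k$). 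Spelling this out carefully is the single nontrivial step; everything else is the routine verification that $\pi$ is a connected ordering and that the greedy run agrees with $f|_X$ on the prefix.
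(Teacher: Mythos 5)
The place where your argument breaks is the extension step: appending $V(G)\setminus X$ in an \emph{arbitrary} order does not work, and the inductive claim that greedy assigns $v_j$ a color at most $f(v_j)$ is false for such orders. The flaw is in your parenthetical remark that a neighbour coloured with some $c\le f(\cdot)$ ``can only forbid color $c$, never create a conflict'': an earlier vertex $v_m\notin X$ may receive a greedy color $c$ strictly smaller than $f(v_m)$, and nothing prevents $c$ from being equal to $f(v_j)$, since $f(v_m)\neq f(v_j)$ says nothing about $c$. Once $f(v_j)$ is blocked this way, $v_j$ can be pushed above $f(v_j)$ and the overflow cascades past $k$. Concretely, take $V(G)=\{x_1,x_2,a,c,d\}$ with edges $x_1x_2$, $x_1a$, $x_1c$, $x_1d$, $x_2d$, $ac$, $cd$; let $X=\{x_1,x_2\}$ (a connected dominating set) and $f(x_1)=1$, $f(x_2)=2$, $f(a)=3$, $f(c)=2$, $f(d)=3$, a proper $3$-coloring whose restriction to $X$ is a CGC of $G[X]$ via the order $(x_1,x_2)$. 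Your construction allows the connected order $(x_1,x_2,a,c,d)$, and greedy then gives $a$ color $2$ (less than $f(a)=3$), hence $c$ color $3$ (more than $f(c)=2$), hence $d$ sees colors $1,2,3$ and receives color $4>k=3$.

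The paper avoids this by choosing the order of $V(G)\setminus X$ carefully: after replacing $f$ by a greedy coloring if necessary (lowering colors of vertices that are not greedily coloured), it appends the sets $f^{-1}(i)\cap(V(G)\setminus X)$ for $i=1,\ldots,k$ \emph{in increasing order of $i$}. With this order the induction does close: when a vertex $v$ of color class $i$ is processed, its already-coloured neighbours in $X$ carry their $f$-colors, all different from $i$, and its already-coloured neighbours outside $X$ lie in classes $i'<i$ and by induction carry colors less than $i$; the vertices of class $i$ itself form a stable set, so none of them is a neighbour. Hence color $i$ is available and $v$ receives a color at most $f(v)\le k$. In the example above this order is $(x_1,x_2,c,a,d)$ and greedy indeed uses only $3$ colors. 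So the first half of your plan (the connected ordering of $X$ reproducing $f|_X$, and connectedness of the full order via domination) matches the paper, but the extension must be by color classes, not arbitrary.
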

\begin{proof}
It suffices to prove that we can obtain a connected ordering of $V(G)$ that produces a CGC with at most $k$ colors. For this, suppose without loss of generality, that $f$  is a greedy coloring of $G$ (otherwise, simply decrease the color of vertices that are not greedily colored). Start with a connected order of $X$ that produces $f$, then add $f^{-1}(i)\cap (V(G)\setminus X)$, for each $i\in \{1,\ldots, k\}$, in this order. The obtained order is a connected one that produces $f$ because every $u\in V(G)\setminus X$ is adjacent to some vertex in $X$, each vertex in $X$ receives the same color as in $f$, and the subsequent vertices are ordered according to their color. 
\end{proof}

Now, consider $G$ to be a connected graph which is either $P_5$-free or $(P_4+K_1)$-free. First, we want to prove that $G$ has a dominating $P_4$. Observe that any $P_4$ in a connected $(P_4+K_1)$-free graph is a dominating $P_4$. So, assume that $G$ is a $P_5$-free graph. 
By a result in~\cite{BaTu90}, we know that $G$ has a dominating set $C$ that is either a clique or a path on~3 vertices. 
Let $f$ be a greedy coloring of $G$ that uses $\chi(G)$ colors (recall that this is always possible). Proposition~\ref{prop:dominating} directly implies that $\chi_c(G)=\chi(G)$ if $C$ is a clique, since we can suppose without loss of generality that the colors of $C$ in $f$ are $\{1,\ldots,|C|\}$. Therefore, we can suppose that $C$ is a dominating $P_3$. 
In fact, we can suppose that there exists $w\in N(z)\setminus N(\{x,y\})$, as otherwise $\{x,y\}$ is a dominating clique and the lemma follows from Proposition~\ref{prop:dominating}.

Now, consider a dominating $P_4$, $P=(x,y,z,w)$, in $G$ and let $f$ be a greedy coloring of $G$ that uses $\chi(G)$ colors. 
We obtain a subset $X$ containing $V(P)$ that satisfies the conditions of Proposition~\ref{prop:dominating}. Clearly, if $f$ restricted to $P$ uses only~2 colors, then we can suppose, without loss of generality, that $f(x)=(z)=2$ and $f(y) = f(w)=1$, in which case the subset $V(P)$ itself satisfies the desired conditions. So suppose otherwise and consider the following cases:

\begin{enumerate}
  \item $f(V(P)) = \{1,2,3\}$: one can see that, without loss of generality, only two possible colorings exist: 
  \begin{enumerate} 
    \item $x$ and $w$ have the same color: suppose, without loss of generality, that $(x,y,z,w)$ are colored with $(1,2,3,1)$, in this order. Let $C$ be the $(1,2)$-component containing $w$. If $x\in C$, then $C\cup\{z\}$ is the desired subset. Otherwise, we switch colors 1 and 2 in $C$ to have a situation similar to the next subcase. \\
    \item $x$ and $z$ have the same color: suppose, without loss of generality, that $(x,y,z,w)$ are colored with $(1,2,1,3)$, in this order. Let $N$ be the set $N(w)\cap f^{-1}(2)$ (neighbors of $w$ colored with~2). Because $f$ is a greedy coloring, we know that $N\neq \emptyset$. Now, if there exists $u\in N$ such that the $(1,2)$-component $C$ containing $u$ also contains $x$, then again $C\cup\{w\}$ is the desired set. Otherwise, note that $C$ cannot contain $z$ either, and that we can switch colors 1 and 2 in every $(1,2)$-component containing some vertex of $N$ in order to recolor $w$ with~2, thus making $V(P)$ the desired subset;
  \end{enumerate}
  \item $f(V(P)) = \{1,2,3,4\}$: without loss of generality, suppose that $P$ is colored with $(4,3,1,2)$, in this order. We can suppose that there exists $u\in N(y)$ such that $f(u)=2$ and the $(1,2)$-component $C$ containing $u$ also contains some neighbor of $y$ colored with~1; this is because otherwise we can recolor vertices in order to decrease the number of colors appearing in $P$, thus getting one of the previous cases. Similarly, there exists $v\in N(x)$ such that $f(v)=2$ and the $(1,2)$-component $C'$ containing $v$ also contains some neighbor of $x$ of color~1, say $v'$. Now, if $v'$ is adjacent to $y$, $z$ or $w$, we get that $C\cup C'\cup V(P)$ is the desired subset: it suffices to greedily color $C$, $y$, $z$ (if not yet colored), $w$, $v'$, $C'$, and $x$, in this order. Otherwise, because $(v',x,y,z,w)$ is a $P_5$, we get that $G$ is a $(P_4+K_1)$-free graph, in which case every $P_4$ of $G$ is a dominating $P_4$. Thus, the argument for the 3-colored dominating $P_4$, $(v',x,y,z)$ can be applied.

\end{enumerate}


\bibliographystyle{plain}
\bibliography{biblio}

\end{document}